\newtheorem*{rep@theorem}{\rep@title}
\newcommand{\newreptheorem}[2]{%
\newenvironment{rep#1}[1]{%
 \def\rep@title{#2 \ref{##1}}%
 \begin{rep@theorem}}%
 {\end{rep@theorem}}}
\newtheorem{theorem}{Theorem}
\newtheorem{lemma}[theorem]{Lemma}
\newtheorem{proposition}[theorem]{Proposition}
\newtheorem{corollary}[theorem]{Corollary}
\newtheorem*{theorem*}{Theorem}
\newtheorem*{proposition*}{Proposition}
\theoremstyle{remark}
\newtheorem{definition}[theorem]{Definition}
\newtheorem{example}[theorem]{Example}
\newtheorem{question}{Question}
\newcommand{\lk}{\operatorname{lk}}
\newcommand{\M}{\mathcal{M}}
\newcommand{\N}{\mathbb{N}}
\newcommand{\Z}{\mathbb{Z}}
\newcommand{\R}{\mathbb{R}}
\newcommand{\bdry}{\partial}
\newcommand{\Sum}{\displaystyle\sum}
\newcommand{\Prod}{\displaystyle\prod}
\newcommand{\Oint}{\displaystyle\oint}
\newcommand{\Int}{\displaystyle\int}
\newcommand{\ceil}[1]{\left\lceil #1 \right\rceil}
\newcommand{\pref}[1]{(\ref{#1})}
\begin{document}

\title[Clasp number and C-Complexes]{The C-complex clasp number of links}

\author{Jonah Amundsen}
\address{Department of Mathematics, University of Wisconsin-Eau Claire, Hibbard Humanities Hall 508,  Eau Claire WI 54702-4004}
\email{amundsjj3573@uwec.edu }

\author{Eric Anderson}
\address{Department of Mathematics, University of Wisconsin-Eau Claire, Hibbard Humanities Hall 508,  Eau Claire WI 54702-4004}
\email{andersew1951@uwec.edu }

\author{Christopher William Davis}
\address{Department of Mathematics, University of Wisconsin-Eau Claire, Hibbard Humanities Hall 508,  Eau Claire WI 54702-4004}
\email{daviscw@uwec.edu}

\author{Daniel Guyer}
\address{Department of Mathematics, University of Wisconsin-Eau Claire, Hibbard Humanities Hall 508,  Eau Claire WI 54702-4004}
\email{guyerdm7106@uwec.edu}

\date{\today}

\subjclass[2010]{}

\keywords{}

\begin{abstract} 
In the 1980's Daryl Cooper introduced the notion of a C-complex (or clasp-complex) bounded by a link and explained how to compute signatures and polynomial invariants using a C-complex.  Since then this was extended by works of Cimasoni, Florens, Mellor, Melvin, Conway, Toffoli, Friedl, and others to compute other link invariants.  Informally a C-complex is a union of surfaces which are allowed to intersect each other in clasps. The purpose of the current paper is to study the minimal number of clasps amongst all C-complexes bounded by a fixed link $L$.  This measure of complexity is related to the number of crossing changes needed to reduce $L$ to a boundary link.  We prove that if $L$ is a 2-component link with nonzero linking number, then the linking number determines the minimal number of clasps amongst all C-complexes.   In the case of 3-component links, the triple linking number provides an additional lower bound on the number of clasps in a C-complex.
\end{abstract}

\maketitle

\section{Introduction}


There is a generalization of a Seifert surface to the setting of links called a \textbf{C-complex} or \textbf{clasp-complex} originally defined by Cooper \cite{Cooper82, CooperThesis}.  Informally, if $L = L_1\cup\dots\cup L_n$ is an $n$-component link then a C-complex for $L$ is a collection of Seifert surfaces, $F=F_1\cup\dots\cup F_n$ for the components of $L$ which are allowed to intersect, but only in clasps.  See Figure~\ref{fig:clasps} for a local picture of a clasp and Figures~\ref{fig: 2-component example} and \ref{fig:examples} for some examples of C-complexes.    See also Definition~\ref{defn: C-complex}.  

\begin{figure}
\begin{picture}(180,100)
\put(0,10){\includegraphics[height=.2\textwidth]{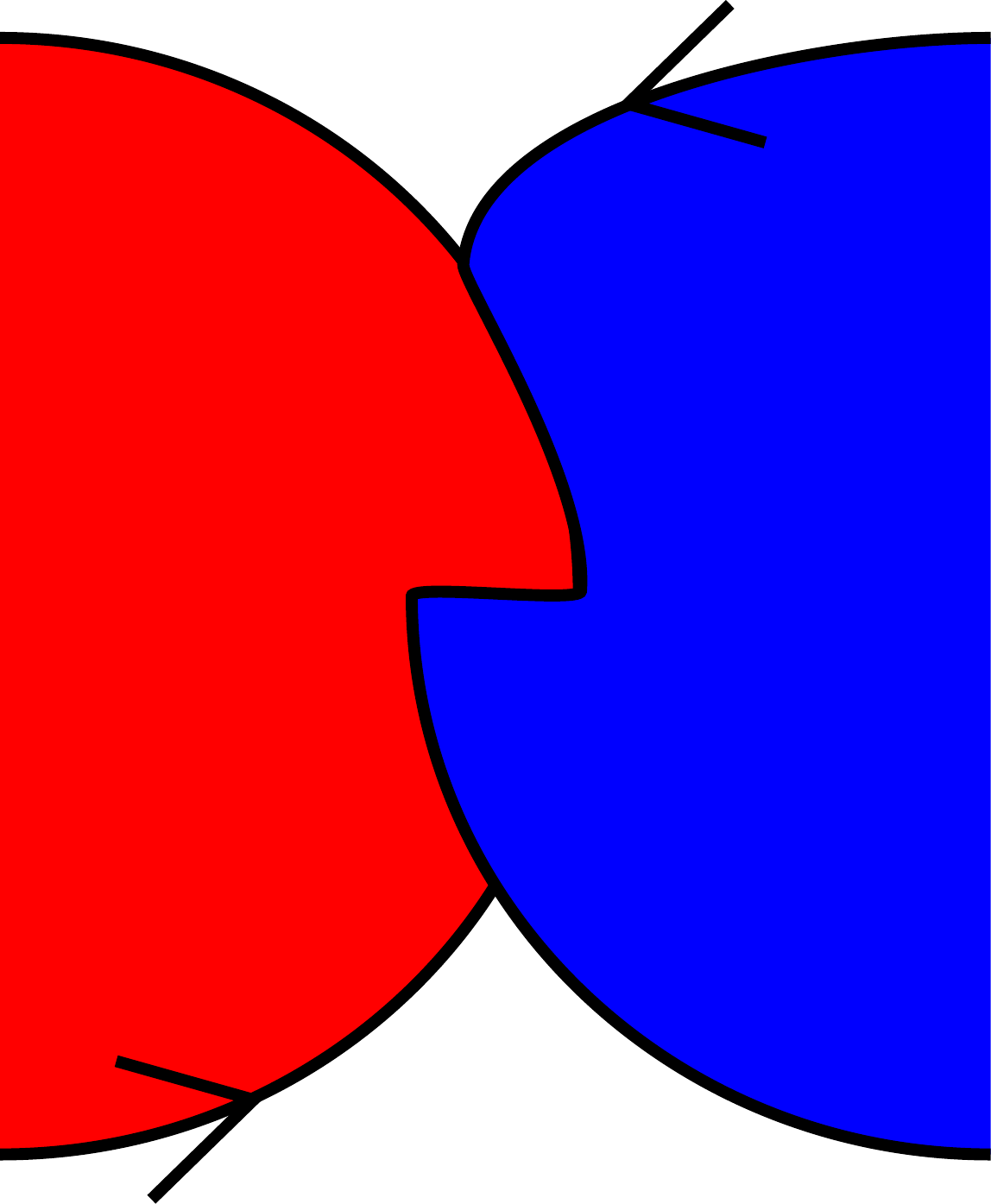}}
\put(100,10){\includegraphics[height=.2\textwidth]{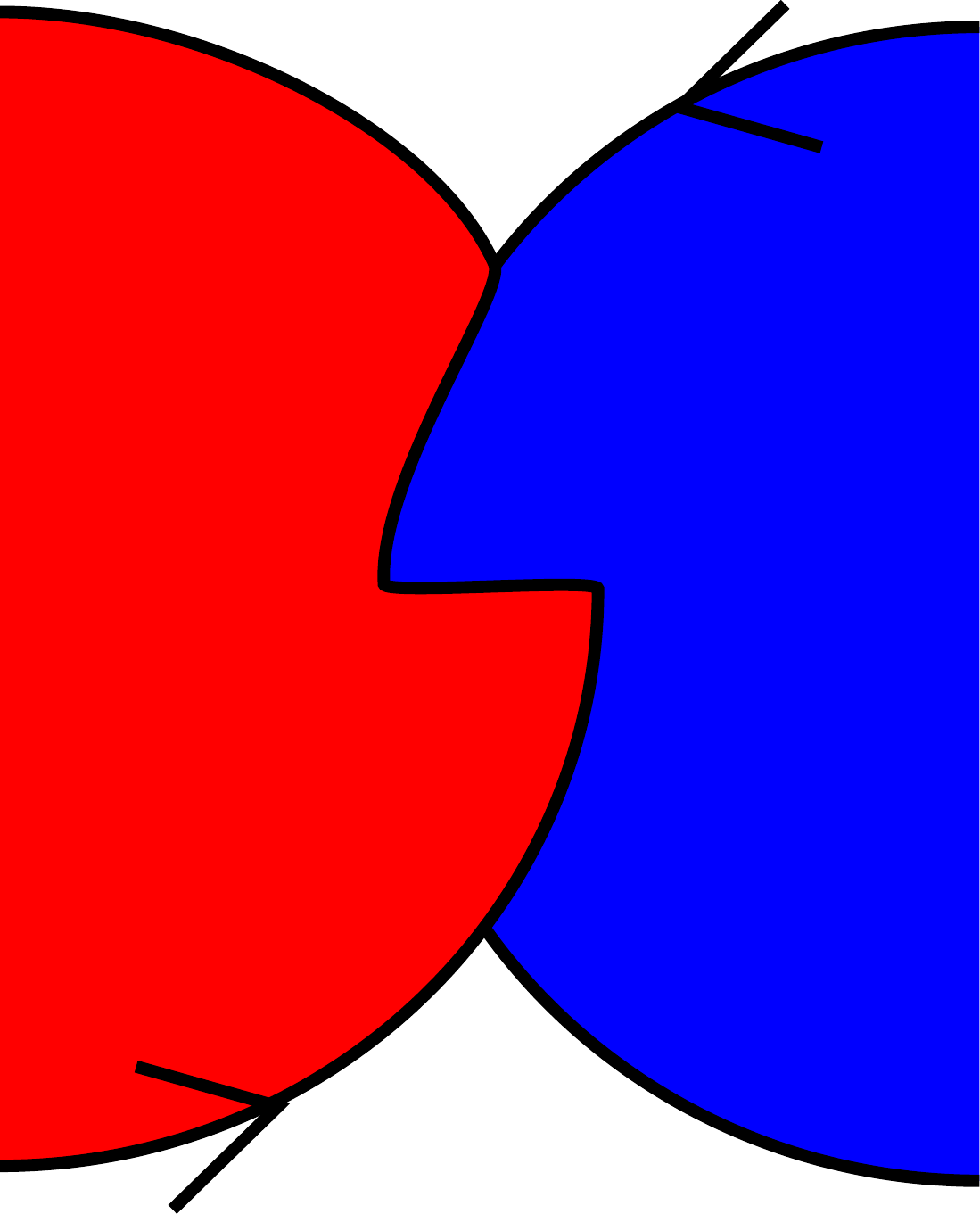}}
\end{picture}
\caption{Left: A positive clasp.  Right: A negative clasp.  
}
\label{fig:clasps}
\end{figure}


If a C-complex, $F$, for $L$ has no clasp intersections, then $F$ is a collection of disjoint Seifert surfaces for the components of $L$.   In this case $L$ is called a \textbf{boundary link} and $F$ is called a \textbf{boundary surface}.  Thus, the number of clasps in a C-complex can be used the measure how far $F$ is from being a boundary surface and so how far $L$ is from being a boundary link.  In this paper we shall study the minimal number of clasps amongst all C-complexes bounded by $L$.    This should not be confused with the clasp number introduced by Shibuya in  in \cite{Shibuya1974}.  

\begin{definition}
For a link $L$ we define the \textbf{clasp number} of $L$, $C(L)$, to be the minimum number of clasps amongst all C-complexes bounded by $L$.
\end{definition}

For a 2-component link $L=L_1\cup L_2$ and any C-complex $F=F_1\cup F_2$ bounded by $L$, the linking number, $\lk(L_1,L_2)$, can be computed as the number of positive clasps in $F$ minus the number of negative.  It follows that $C(L)\ge |\lk(L_1,L_2)|$.  Our first main result is that for most $2$-component links, $C(L) = |\lk(L_1,L_2)|$.  

\begin{theorem}\label{thm:main 2-component}
Let $L=L_1\cup L_2$ be a 2-component link.  If $\lk(L_1,L_2)\neq 0$ then $C(L) = |\lk(L_1,L_2)|$.  If $\lk(L_1,L_2)=0$ then $C(L)\in \{0,2\}$.  
\end{theorem}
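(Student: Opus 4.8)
The plan is to keep the lower bound $C(L)\ge |\lk(L_1,L_2)|$ already recorded in the text and, in each of the two cases, to match it by \emph{simplifying} an arbitrary C-complex rather than by constructing an optimal one from scratch. Write $c_+$ and $c_-$ for the numbers of positive and negative clasps of a C-complex $F$, so that $\lk(L_1,L_2)=c_+-c_-$ and the total clasp number is $c=c_++c_-$. Two elementary observations drive everything. First, $c\equiv \lk(L_1,L_2)\pmod 2$, so when $\lk(L_1,L_2)=0$ every C-complex has an \emph{even} number of clasps. Second, if all clasps of $F$ carry a single sign then $c=|c_+-c_-|=|\lk(L_1,L_2)|$. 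Since every link bounds some C-complex (Definition~\ref{defn: C-complex}), it then suffices to show that any C-complex can be simplified until either all its clasps share one sign or it has at most two clasps.

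The engine of the argument is a cancellation lemma: \emph{if a C-complex has at least three clasps and has clasps of both signs, then $L$ bounds a C-complex with two fewer clasps.} To prove it I would select a positive clasp and a negative clasp and join their clasp arcs by an arc $\alpha$ running just inside $F_1$ along $L_1$ and an arc $\beta$ running just inside $F_2$ along $L_2$, so that $\alpha$, $\beta$, and the two clasp arcs close up into a single Whitney circle. Because the two clasps have opposite sign, this circle is set up for a finger/Whitney move that deletes both intersection arcs simultaneously, at the cost only of an isotopy of $L$ and possibly additional genus on the surfaces, and crucially introducing no new clasp. The delicate point is embeddedness: the connecting band must be guided past the remaining clasp data. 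This is exactly where the hypothesis of a third clasp enters; with more than two clasps present one has room on the boundary circles to slide a neighboring opposite-sign pair into an unobstructed, locally Reidemeister-II position before performing the move.

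With the lemma in hand the reduction is immediate: apply it repeatedly, strictly decreasing $c$ each time, until the process halts, which occurs precisely when either all clasps have one sign or fewer than three clasps remain. If $\lk(L_1,L_2)\neq 0$, the constraint $c_+-c_-=\lk(L_1,L_2)\ne 0$ rules out the terminal possibility $c\le 2$ with mixed signs, so the only terminal configuration is ``all clasps of one sign,'' whence $c=|\lk(L_1,L_2)|$ and, combined with the lower bound, $C(L)=|\lk(L_1,L_2)|$. If $\lk(L_1,L_2)=0$ then $c_+=c_-$, so the all-one-sign terminal case forces $c=0$, while the remaining terminal case gives $c\le 2$; by the parity observation $c$ is even, so $c\in\{0,2\}$ and therefore $C(L)\in\{0,2\}$.

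The main obstacle is the cancellation lemma, and specifically the embeddedness of the Whitney move. For $\lk(L_1,L_2)=0$ with exactly two clasps this move can genuinely fail: the two oppositely signed clasps may be linked, as in the Whitehead link, and that failure is exactly what permits $C(L)=2$ rather than $0$. The proof of the lemma must therefore pinpoint the stage at which having at least three clasps supplies enough room to produce an unobstructed cancelling pair, while remaining honest that at two clasps no such guarantee exists—this asymmetry is the entire content distinguishing the two cases of the theorem.
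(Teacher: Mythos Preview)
Your reduction strategy is genuinely different from the paper's, but the cancellation lemma that carries the whole argument is not proved, and the sketch you give does not establish it. You assert that whenever a C-complex has at least three clasps of mixed signs one can perform a Whitney-type move deleting an opposite-sign pair, and you locate the role of the third clasp as providing ``room on the boundary circles'' to slide a cancelling pair into an unobstructed position. But the obstruction in the Whitehead-link example is not a lack of room along $\partial F_1$ or $\partial F_2$: it is that the Whitney circle built from the two clasp arcs and the two boundary arcs need not bound an embedded disk in $S^3$ disjoint from the rest of $F_1\cup F_2$. A third clasp sitting elsewhere on the surfaces does nothing to unknot that circle or to clear the would-be Whitney disk of surface interior. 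You yourself flag this as ``the main obstacle'' and say the proof ``must pinpoint'' how three clasps supply the needed room, but you never actually pinpoint it. As written, the lemma is essentially equivalent in strength to the theorem, so invoking it without proof is circular.

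The paper avoids this difficulty entirely by working constructively rather than by simplification. It starts with an arbitrary Seifert surface $F_1$ for $L_1$ and tubes away opposite-sign intersections of $L_2$ with $F_1$ \emph{before} any second surface is present; symmetrically for $F_2$. This guarantees that once $F_1$ and $F_2$ are superimposed, all clasp intersections already carry a single sign and there are no ribbon intersections; the only remaining defects are closed-loop intersections interior to both surfaces. Those are then absorbed one at a time by tubing a loop into an existing clasp (or, when $\lk(L_1,L_2)=0$, into a newly introduced cancelling pair of clasps). The point is that the paper never needs to cancel two clasps against one another inside an existing C-complex, which is exactly the step your approach cannot justify.
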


\begin{figure}[h]
\begin{picture}(180,100)
\put(0,10){\includegraphics[height=.2\textwidth]{PosClasp.pdf}}
\put(100,10){\includegraphics[height=.2\textwidth]{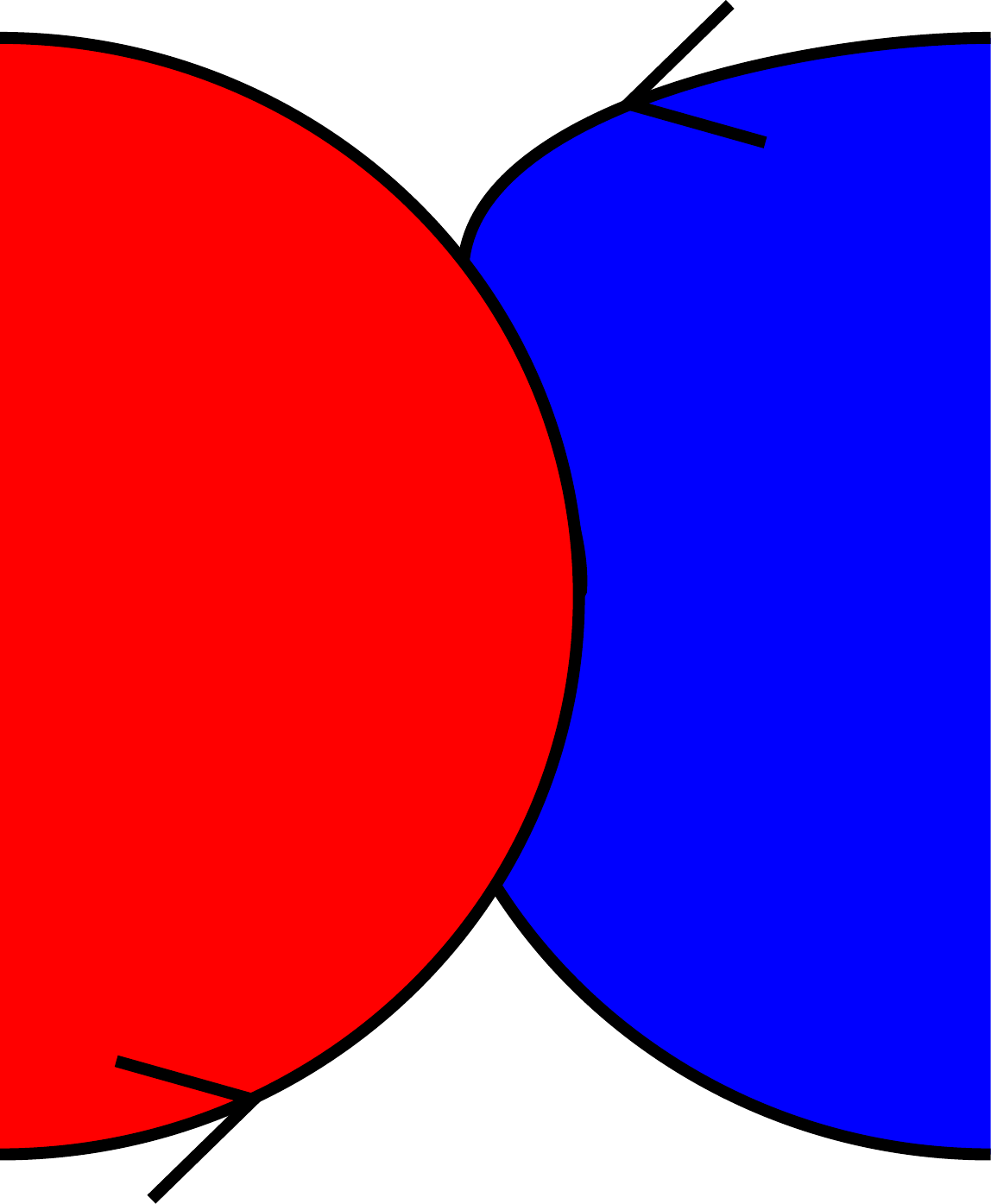}}
\end{picture}
\caption{Left: A clasp.  Right: A crossing change removing the clasp.}
\label{fig: crossing change}
\end{figure}

We mentioned that the number of clasps in a C-complex for $L$ measures how far that link is from being a boundary link.  We take a moment and make that explicit.  Any link can be reduced to a boundary link by a finite sequence of crossing changes.  Indeed, that boundary link can be taken to be the unlink.  Let $B(L)$ be the minimum number of crossing changes needed to reduce $L$ to a boundary link.  If $F$ is a C-complex for $L$ admitting $C(L)$ total clasps, then by changing a crossing at each clasp as in Figure~\ref{fig: crossing change} one reduces $F$ to a boundary surface and so $L$ to a boundary link.  Therefore
$$
B(L)\le C(L).
$$
On the other hand, changing a crossing of $L$ changes only one linking number of $L$ and that by at most $1$.  As any boundary link has vanishing pairwise linking numbers, we conclude that if $L=L_1\cup\dots\cup L_n$ is an $n$-component link then
$$
\Sum_{1\le i<j\le n }|\lk(L_i,L_j)| \le B(L).
$$

According to Theorem~\ref{thm:main 2-component} if $L=L_1\cup L_2$ has only two components and $\lk(L_1,L_2)\neq 0$ then $C(L) = |\lk(L_1,L_2)|$.  Thus, the discussion of the preceding paragraph yields the following corollary

\begin{corollary}\label{cor: 2-component}
Let $L=L_1\cup L_2$ be a 2-component link.  If $\lk(L_1,L_2)\neq 0$ then there exists a sequence of $|\lk(L_1,L_2)|$ crossing changes reducing $L$ to a boundary link.  If $\lk(L_1,L_2)= 0$ then either $L$ is a boundary link or there exists a sequence of $2$ crossing changes reducing $L$ to a boundary link. 
\end{corollary}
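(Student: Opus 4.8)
The plan is to read this off directly from Theorem~\ref{thm:main 2-component} together with the clasp-resolution construction already described preceding the statement. Recall that a C-complex with no clasps is precisely a boundary surface, and that the quantity $C(L)$ is by definition realized by an actual C-complex bounded by $L$. The one geometric fact I will use is the move in Figure~\ref{fig: crossing change}: a single crossing change converts a clasp into a pair of disjoint sheets, so performing one crossing change at each of the clasps of a C-complex $F$ with $k$ clasps turns $F$ into a boundary surface and hence reduces $L$ to a boundary link using exactly $k$ crossing changes.

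For the first case, suppose $\lk(L_1,L_2)\neq 0$. By Theorem~\ref{thm:main 2-component} we have $C(L) = |\lk(L_1,L_2)|$, so there is a C-complex $F = F_1\cup F_2$ bounded by $L$ with exactly $|\lk(L_1,L_2)|$ clasps. Applying the crossing-change move at each clasp yields a sequence of $|\lk(L_1,L_2)|$ crossing changes that reduces $L$ to a boundary link, which is precisely the asserted conclusion.

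For the second case, suppose $\lk(L_1,L_2)=0$. Theorem~\ref{thm:main 2-component} gives $C(L)\in\{0,2\}$. If $C(L)=0$ then $L$ bounds a clasp-free C-complex, i.e.\ a boundary surface, so $L$ is itself a boundary link and no crossing changes are needed. If instead $C(L)=2$, then choosing a C-complex with exactly two clasps and resolving each by a crossing change reduces $L$ to a boundary link in $2$ crossing changes.

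I do not expect any genuine obstacle here: all of the substance is contained in Theorem~\ref{thm:main 2-component}, and the corollary merely repackages its conclusion via the clasp-resolution move. The single point worth checking carefully is that the minimum defining $C(L)$ is attained by a concrete C-complex, so that the crossing changes can actually be performed; this is immediate from the definition of $C(L)$ as a minimum over C-complexes bounded by $L$.
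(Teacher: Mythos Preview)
Your proposal is correct and follows essentially the same approach as the paper: the corollary is deduced directly from Theorem~\ref{thm:main 2-component} together with the clasp-resolution crossing change of Figure~\ref{fig: crossing change}, exactly as in the discussion preceding the statement of the corollary. There is nothing to add.
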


\begin{example}
In order to demonstrate that Theorem~\ref{thm:main 2-component} and Corollary~\ref{cor: 2-component} are surprising, consider the link of Figure~\ref{fig: 2-component example}.  The depicted C-complex has three clasps.  Since $\lk(L_1,L_2)=1$, there exists a C-complex bounded by $L$ with a single clasp and perhaps more surprisingly there exists a single crossing change reducing $L$ to a boundary link.  
\end{example}

\begin{figure}[h]
\begin{picture}(190,70)
\put(0,0){\includegraphics[height=.1\textheight]{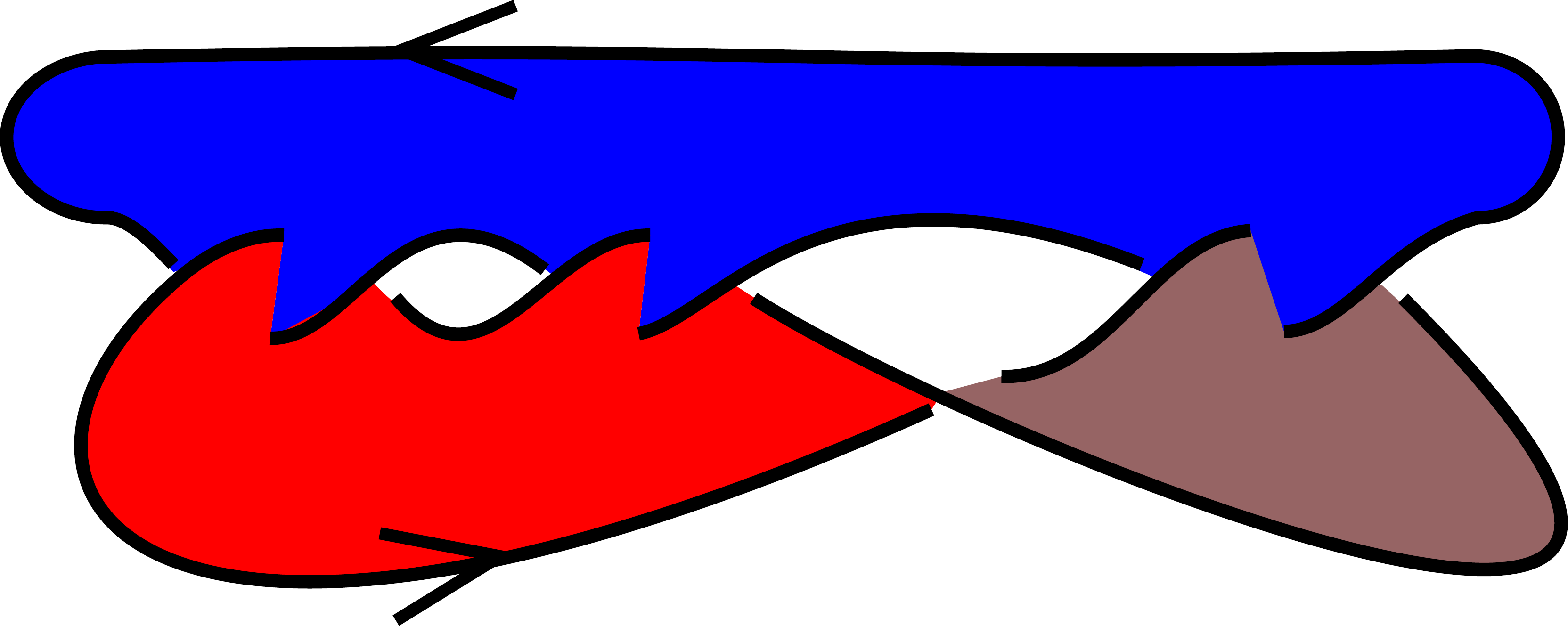}}
\end{picture}
\caption{A 2-component link with linking number $1$.}\label{fig: 2-component example}
\end{figure}

According to Theorem~\ref{thm:main 2-component}, the linking number  determines the clasp number of 2-component links.  This behavior does not extend to links of more than 2 components.  In \cite{Milnor1950}, Milnor introduced a family of higher order linking invariants.  The first of these, the triple linking number, $\mu_{ijk}$, is well defined when the pairwise linking numbers vanish and  measures the linking of the $i$'th, $j$'th, and $k$'th components.  According to Mellor-Melvin \cite{MellorMelvin2003}, $\mu_{ijk}(L)$ can be computed in terms of the clasps of a C-complex bounded by $L$.  Thus, it comes as no surprise that $\mu_{123}(L)$ can be used to deduce a bound on $C(L)$. We explicitly compute this bound. 

\begin{theorem}\label{thm:main 3-component}
Let $L=L_1\cup L_2\cup L_3$ be a 3-component link with vanishing pairwise linking numbers.  Then $C(L)\ge 2\ceil{2\sqrt{|\mu_{123}(L)|/3}}$.  Here $\ceil{-}$ is the ceiling function.  
\end{theorem}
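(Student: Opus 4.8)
The plan is to combine the Mellor--Melvin formula for $\mu_{123}$ with an extremal estimate on the signed count of clasps, and then to invert the resulting quadratic inequality. Fix an arbitrary C-complex $F=F_1\cup F_2\cup F_3$ for $L$, and for each pair $i<j$ let $c_{ij}^+$ and $c_{ij}^-$ denote the numbers of positive and negative clasps in which $F_i$ meets $F_j$. Since $\lk(L_i,L_j)=c_{ij}^+-c_{ij}^-=0$, we have $c_{ij}^+=c_{ij}^-$, so the total number $c_{ij}=c_{ij}^++c_{ij}^-$ of clasps between $F_i$ and $F_j$ is even; write $c_{ij}=2m_{ij}$ and set $M=m_{12}+m_{13}+m_{23}$, so that the total clasp count of $F$ is $2M$. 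It suffices to prove $2M\ge 2\ceil{2\sqrt{|\mu_{123}(L)|/3}}$ for every such $F$ and then to minimize over $F$.

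First I would invoke the Mellor--Melvin computation to write $\mu_{123}(L)$ as an explicit signed sum indexed by pairs of clasps lying on a common surface $F_i$ (a clasp of $F_i\cap F_j$ paired with a clasp of $F_i\cap F_k$), each pair contributing $\pm1$ or $0$ according to the signs of the two clasps and their relative cyclic position along $F_i$. The key step is to bound the absolute value of this sum. The vanishing of the pairwise linking numbers forces the positive and negative clasps on each surface to balance, and this balance is exactly what limits how large the signed sum can be. Carrying out the extremal analysis—distributing the $2M$ clasps among the three surfaces and arranging their signs so as to make the sum as large as possible—I expect to obtain the quadratic bound
\[
|\mu_{123}(L)|\ \le\ \tfrac34\,M^2 .
\]

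Granting this inequality, the conclusion is routine algebra. Rearranging gives $M^2\ge \tfrac43|\mu_{123}(L)|$, hence $M\ge 2\sqrt{|\mu_{123}(L)|/3}$. Because $M$ is a non-negative integer, $M\ge \ceil{2\sqrt{|\mu_{123}(L)|/3}}$, and therefore the total number of clasps of $F$ satisfies $2M\ge 2\ceil{2\sqrt{|\mu_{123}(L)|/3}}$. As $F$ was arbitrary, taking the minimum over all C-complexes bounded by $L$ yields $C(L)\ge 2\ceil{2\sqrt{|\mu_{123}(L)|/3}}$.

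The main obstacle is the quadratic estimate $|\mu_{123}|\le\frac34 M^2$. One must convert the Mellor--Melvin formula into a purely combinatorial optimization over the signs and positions of the clasps on each surface, subject to the balance $c_{ij}^+=c_{ij}^-$, and then identify the configuration that maximizes $|\mu_{123}(L)|$ for a fixed total number of clasps. Pinning down the sharp constant $3/4$ is the delicate point, since a careless term-by-term bound of each pair of clasps by a product of clasp counts yields a different constant and need not reflect the true extremum; the parity and integrality bookkeeping that produces the factor $2$ and the ceiling is comparatively straightforward.
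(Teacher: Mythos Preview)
Your proposal is not a proof but an outline whose entire content reduces to the single unproved assertion $|\mu_{123}(L)|\le\tfrac34 M^2$, which is logically equivalent to the theorem itself. Everything else you wrote---the parity of $c_{ij}$, the algebra inverting the quadratic, the passage to the ceiling---is correct but trivial; the substance of the argument is exactly the step you flag as ``the main obstacle'' and leave undone. You do not describe a mechanism for the extremal analysis beyond saying you would optimize over signs and positions, and you correctly note that the naive term-by-term bound gives the wrong constant. As written, this is a restatement of what must be proved, not a proof.

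The paper takes a more structured route that avoids any global optimization. It writes $\mu_{123}=e_{23}(w_1)+e_{31}(w_2)+e_{12}(w_3)$ and observes that the largest summand in absolute value, say $|e_3|$, is at least $|\mu_{123}|/3$. The heart of the argument is then an isoperimetric inequality for a single clasp word: interpreting $e_{ij}(w)$ as $\oint_{\gamma_{ij}(w)}x\,dy$ for a lattice (polyomino) curve $\gamma_{ij}(w)$ of length $|w|$, one invokes the Harary--Harborth bound to get $|w_3|\ge 2\lceil 2\sqrt{|e_3|}\rceil$. Finally the elementary inequality $|w_3|\le |w_1|+|w_2|$ (each clasp on $F_3$ is also a clasp on $F_1$ or $F_2$) yields $2C(F)=|w_1|+|w_2|+|w_3|\ge 2|w_3|$. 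Combining these gives the theorem. The key idea you are missing is this reduction to a single word together with the geometric interpretation of $e_{ij}$ as a signed area, which converts the problem into a known discrete isoperimetric inequality rather than an ad hoc combinatorial maximization.
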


In order to illustrate the power of this theorem we compute the clasp number of some examples.  The Boromean rings, denoted $BR$, has $\mu_{123}(BR)=1$ and so by Theorem~\ref{thm:main 3-component}, $C(BR)\ge 4$.  Figure~\ref{fig:examples}~(a) depicts a C-complex bounded by $BR$ with four clasps.  Thus, $C(BR)=4$.
For any $n\in \N$ the generalized Boromean rings $BR^n$ of Figure~\ref{fig:examples}~(b) bounds a C-complex with $4n$ clasps and has $\mu_{123}(BR^n) = n^2$.  We make this computation in Proposition~\ref{prop:compute}.  As a consequence we get the following corollary, producing links with vanishing pairwise linking numbers and arbitrarily large clasp number.  

\begin{figure}
\begin{picture}(320,240)
\put(0,50){\includegraphics[height=.18\textheight]{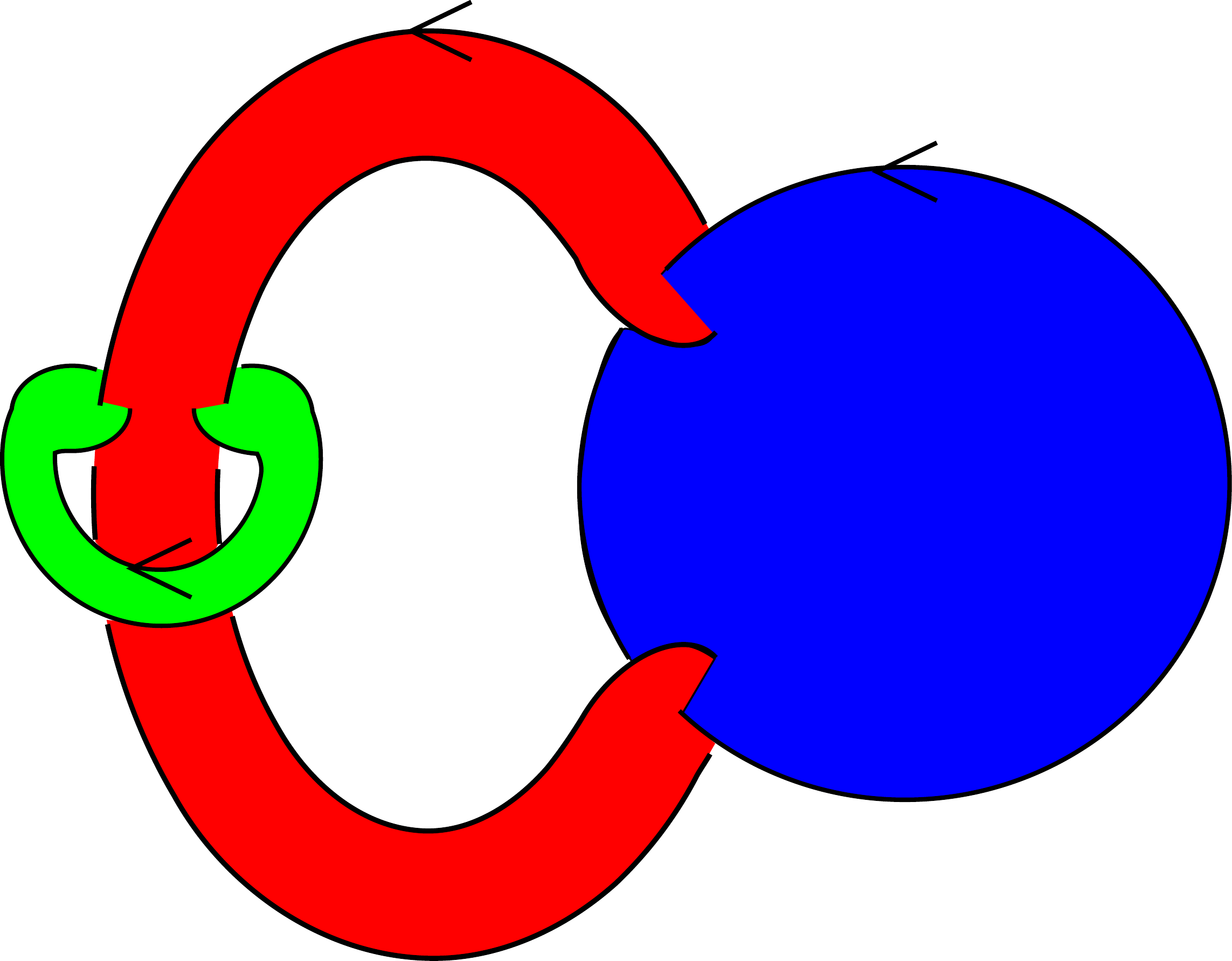}}
\put(40,40){{$BR_1$}}
\put(-15,90){{$BR_3$}}
\put(100,60){{$BR_2$}}
\put(55,25){(a)}

\put(160,40){\includegraphics[width=.3\textwidth]{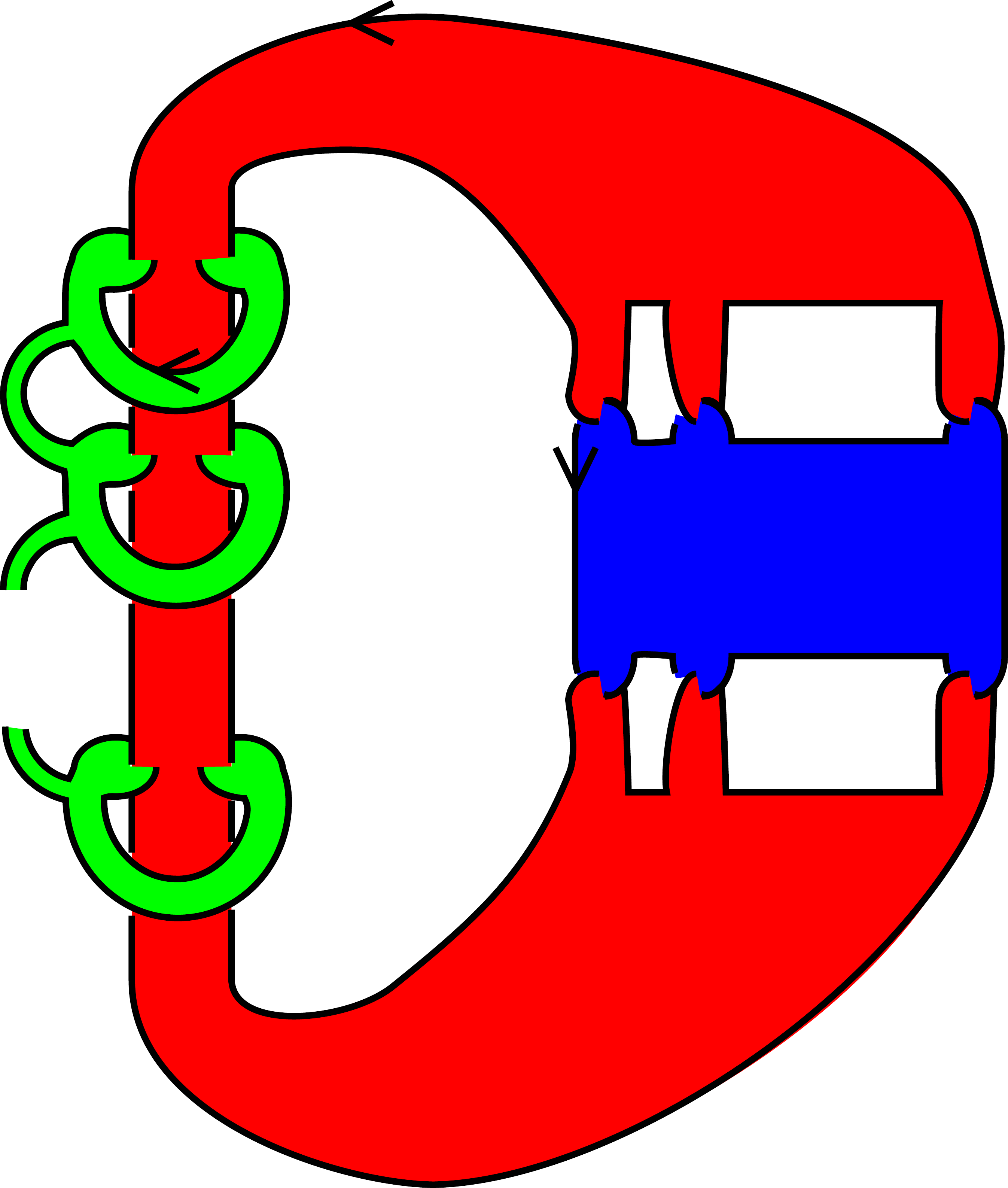}}
\put(270,152){$\dots$}
\put(270,105){$\dots$}
\put(160,110){$\vdots$}

\put(295,180){$BR^n_1$}

\put(148,80){$BR^n_3$}

\put(302,125){$BR^n_2$}

\put(215,25){(b)}

%
%
%
%
%
\end{picture}
\caption{(a) A C-complex bounded by the Boromean rings.  (b)  A C-complex bounded by the generalized Boromean rings, $BR^n$.  
}\label{fig:examples}
\end{figure}

%
%
%
%

\begin{corollary}\label{cor: 3-component}
For any $n\in \N$, consider generalized Boromean rings $BR^n$ of Figure~\ref{fig:examples}~(b).  The pairwise linking numbers of $BR^n$ vanish and yet $2\ceil{2n/\sqrt{3}}\le C(L) \le 4n$.  
\end{corollary}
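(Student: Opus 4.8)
The plan is to combine Theorem~\ref{thm:main 3-component} with the computation announced in Proposition~\ref{prop:compute}, so that the corollary reduces to a matter of substitution. First I would record the upper bound. Figure~\ref{fig:examples}~(b) exhibits an explicit C-complex bounded by $BR^n$, built from three surfaces meeting in $4n$ clasps; since $C(BR^n)$ is by definition the minimum number of clasps over all C-complexes bounded by $BR^n$, this immediately gives $C(BR^n)\le 4n$.

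For the lower bound I would invoke Proposition~\ref{prop:compute}, which asserts that the pairwise linking numbers of $BR^n$ vanish and that $\mu_{123}(BR^n)=n^2$. The vanishing of the pairwise linking numbers is exactly the hypothesis needed both for $\mu_{123}$ to be well defined and for Theorem~\ref{thm:main 3-component} to apply. Substituting $|\mu_{123}(BR^n)|=n^2$ into that theorem yields
$$
C(BR^n)\ge 2\ceil{2\sqrt{|\mu_{123}(BR^n)|/3}} = 2\ceil{2\sqrt{n^2/3}}.
$$
Since $n\in\N$ is nonnegative we have $\sqrt{n^2/3}=n/\sqrt{3}$, so the right-hand side equals $2\ceil{2n/\sqrt{3}}$, which is precisely the claimed lower bound. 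Combining the two inequalities gives $2\ceil{2n/\sqrt{3}}\le C(BR^n)\le 4n$.

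The only genuine content lies in Proposition~\ref{prop:compute}: once its two assertions (vanishing pairwise linking numbers and $\mu_{123}(BR^n)=n^2$) are in hand, the corollary follows with no further work. Thus the main obstacle is not in the corollary itself but in that proposition, namely verifying the triple linking number computation for the generalized Borromean rings, which I am treating here as already established. I would take care only to note that the ceiling and the leading factor of $2$ transcribe directly from the theorem, and that the simplification $\sqrt{n^2/3}=n/\sqrt{3}$ uses $n\ge 0$.
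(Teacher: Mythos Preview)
Your proposal is correct and matches the paper's own (implicit) argument: the upper bound comes from the explicit $4n$-clasp C-complex in Figure~\ref{fig:examples}~(b), and the lower bound is obtained by plugging $\mu_{123}(BR^n)=n^2$ from Proposition~\ref{prop:compute} into Theorem~\ref{thm:main 3-component}. One small textual caveat: Proposition~\ref{prop:compute} as stated only computes $\mu_{123}$; the vanishing of the pairwise linking numbers is not part of its statement, though it is evident from the clasp words in its proof (each $w_k$ has zero signed exponent sum in every letter).
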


  In \cite{MellorMelvin2003}, Mellor-Melvin provides a means of computing $\mu_{123}(L)$ in terms of any collection of Seifert surfaces for the components of $L$.  We shall use this result in the special case of a C-complex.  While a more complete description appears in Section~\ref{sect: triple linking}, we recall it informally now.  Start with a link $L=L_1\cup L_2\cup L_3$ bounding a C-complex $F=F_1\cup F_2\cup F_3$, follow a component $L_k$ of $L$, and record a word $w_k(F)$ in $x_1^{\pm1},x_2^{\pm1}, x_3^{\pm1}$ capturing the order and sign of the clasps $L_k$ encounters.  Set $e_{ij}(w_k(F))\in \Z$ to be the signed count of the number of $x_i$'s appearing in $w_k$ before an $x_j$. The triple linking number is given by $\mu_{123}(L) = e_{12}(w_3(F))+e_{23}(w_1(F))+e_{31}(w_2(F))$. 
  
  A technical result we use in our proof of Theorem~\ref{thm:main 3-component} is a new geometric strategy to compute $e_{ij}(w)$.  For any word $w$ in letters $x_1^{\pm1}, x_2^{\pm1}, x_3^{\pm1}$ and any $i,j\in \{1,2,3\}$ construct a piecewise linear curve $\gamma_{ij}(w)$ in $\R^2$ as follows.  Start at the origin $(0,0)$.  Each time you see an $x_i$ (respectively $x_i^{-1}$, $x_j$, $x_j^{-1}$) in $w$ travel right (respectively left, up, down) a length of 1.  The following reveals that $e_{ij}(L)$ is the area enclosed by this curve.  
  
  \begin{theorem}\label{thm:eij in terms of area}
  Let $w = \Prod_{n=1}^m x_{i_n}^{\epsilon_n}$ be a word in letters $x_1^{\pm1}, x_2^{\pm1}, x_3^{\pm1}$.  For any $i\neq j\in \{1,2,3\}$,  $e_{ij}(w) = \Oint_{\gamma_{ij}(w)} x\,dy.$
Additionally, if $\gamma_{ij}(w)$ is a simple closed curve with counterclockwise orientation, then $e_{ij}(w)$ is the area enclosed by $\gamma_{ij}(w)$.
  \end{theorem}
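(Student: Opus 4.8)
The plan is to establish the first identity by evaluating the line integral one segment at a time and matching the result against the combinatorial definition of $e_{ij}(w)$, and then to obtain the area statement from Green's theorem. First I would record the definition in summation form. Writing $w = \Prod_{n=1}^m x_{i_n}^{\epsilon_n}$, the quantity $e_{ij}(w)$ counts, with sign $\epsilon_a\epsilon_b$, each occurrence of an $x_i$-letter preceding an $x_j$-letter, so that
$$
e_{ij}(w) = \Sum_{\substack{1\le a<b\le m\\ i_a=i,\ i_b=j}} \epsilon_a\epsilon_b.
$$
Next I would fix an explicit parametrization of $\gamma_{ij}(w)$ as a concatenation of $m$ consecutive unit segments $\sigma_1,\dots,\sigma_m$, one per letter of $w$: the segment $\sigma_n$ is a horizontal step of signed length $\epsilon_n$ when $i_n=i$, a vertical step of signed length $\epsilon_n$ when $i_n=j$, and a degenerate (constant) step when $i_n$ equals the remaining index $k$.

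Then I would compute $\oint_{\gamma_{ij}(w)} x\,dy = \Sum_{n=1}^m \int_{\sigma_n} x\,dy$. The key observation is that $dy\equiv 0$ along every horizontal segment and every degenerate segment, so only the vertical segments, those with $i_n=j$, contribute. Along such a segment $\sigma_b$ the $x$-coordinate is constant and equal to the net horizontal displacement accumulated by the earlier letters, namely $\Sum_{a<b,\, i_a=i}\epsilon_a$, since the letter at position $b$ is an $x_j$-letter and so contributes nothing to the horizontal coordinate. Because $\int_{\sigma_b} dy=\epsilon_b$, this gives $\int_{\sigma_b} x\,dy = \epsilon_b \Sum_{a<b,\, i_a=i}\epsilon_a$.

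Summing over all $b$ with $i_b=j$ and interchanging the order of the nested sum reproduces exactly the double sum above, which proves $e_{ij}(w)=\oint_{\gamma_{ij}(w)} x\,dy$. For the final assertion, suppose $\gamma_{ij}(w)$ is a counterclockwise-oriented simple closed curve bounding a region $R$. Then Green's theorem (equivalently, the standard planar area formula) gives $\oint_{\gamma_{ij}(w)} x\,dy = \iint_R dA = \area(R)$, and combining this with the identity just established yields $e_{ij}(w)=\area(R)$, as claimed.

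I expect no genuine obstacle here; the only subtlety is the bookkeeping in the indexing. Specifically, I would need to verify that the inequality is strict, so that the contribution of $\sigma_b$ uses only letters strictly preceding position $b$; that the third letter $x_k^{\pm1}$ is correctly assigned zero contribution to both coordinates and hence to the integral; and that the vertical step at position $b$ does not itself alter the $x$-coordinate used in evaluating its own integral. Once these are confirmed, the double sum matches the definition of $e_{ij}(w)$ term by term, and the remainder of the argument is routine.
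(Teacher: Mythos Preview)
Your proposal is correct and follows essentially the same approach as the paper: decompose $\gamma_{ij}(w)$ into $m$ unit segments, observe that only the vertical ones (those with $i_n=j$) contribute to $\oint x\,dy$, identify the constant $x$-coordinate along each such segment as the signed count of prior $x_i$-letters, and then sum to recover the defining double sum for $e_{ij}(w)$; the area statement is dispatched via Green's theorem in both. Your bookkeeping remarks about the strict inequality $a<b$ and the null contribution of the third letter are exactly the minor points one needs to check, and they pose no difficulty.
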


\subsection{Questions}

Theorem~\ref{thm:main 2-component} states that any 2-component link with nonzero linking number has a C-complex admitting precisely $|\lk(L_1,L_2)|$ clasps.  However, our proof makes no attempt to minimize the first Betti number of the C-complex, which is the measure of complexity most directly accessible using the tools like Alexander polynomial or signature \cite{Cimasoni2004, CimFlo2008}.  We pose the following question.

\begin{question}
Suppose that $L = L_1\cup L_2$ is a 2-component link with nonzero linking number.  Amongst all C-complexes $F$ bounded by $L$ admitting precisely $|\lk(L_1, L_2)|$ clasps, what is the minimal value for $\beta_1(F)$?  Is it possible to simultaneously minimize the number of clasps in $F$ as well as $\beta_1(F)$?
\end{question}

Theorem~\ref{thm:main 2-component} almost completely determines $C(L)$ for 2-component links.  Theorem~\ref{thm:main 3-component} concludes that $C(L)\ge 2\ceil{2\sqrt{|\mu_{123}(L)|/3}}$ for three component links with vanishing linking numbers.  One might ask if equality holds.

\begin{question}
Let $L=L_1\cup L_2\cup L_3$ be a 3-component link with vanishing pairwise linking numbers and $\mu_{123}(L)\neq 0$.  Does it follows that $C(L) =  
2\ceil{2\sqrt{|\mu_{123}(L)|/3}}$?
\end{question}

More specifically, for any $n\in \N$, consider the generalized Boromean rings $BR^n$ of Figure~\ref{fig:examples}~(b).  Corollary~\ref{cor: 3-component} concludes that $2\ceil{2n/\sqrt{3}}\le C(BR^n)\le 4n$.  When $n=2$ this gives $6\le C(BR^2)\le 8$. 

\begin{question} What is $C(BR^n)$?
\end{question}

%
%
%
%
%

One might ask about the clasp number of links of more than three components.

\begin{question}
Let $n\ge3$ and let $L=L_1\cup \dots \cup L_n$ be an $n$-component link with vanishing pairwise linking numbers and $\mu_{ijk}(L)\neq 0$ for some $i,j,k$.  Is there a formula for $C(L)$ in terms of the set of all triple linking numbers of $L$?
\end{question}

In the case of links of more than 2 components with nonvanishing pairwise linking numbers, the triple linking numbers are not well defined. Instead by \cite{DNOP} there is a \textbf{total triple linking number} recording all of the individual triple linking numbers taking values in some quotient $\M$.  

\begin{question}
Let $L=L_1\cup \dots \cup L_n$ be an $n$-component link with either a nonvanishing pairwise linking number or a nonvanishing triple linking number.  Is there a formula for $C(L)$ in terms of the linking numbers and the total triple linking number?
\end{question}


\section{C-complexes and the proof of Theorem~\ref{thm:main 2-component}}

Throughout this paper all knots will be smoothly embedded curves in $S^3$, and all surfaces will be smoothly embedded in $S^3$, compact, connected, and oriented.  A smoothly embedded compact oriented surface with boundary equal to a knot $K$ will be called a \textbf{Seifert surface} for $K$.  We begin by recalling the formal definition of a C-complex.    

\begin{definition}\label{defn: C-complex}\cite[Section 2.1]{CimFlo2008}
Given a link, $L = L_1\cup\dots\cup L_n$, a C-complex for $L$ is a collection of Seifert surfaces, $F = F_1\cup\dots\cup F_n$, for the components of $L$ which may intersect transversely with the following constraints:
\begin{enumerate}
\item For each $i,j\in\{1,\dots, n\}$, $F_i\cap F_j$ is a union of simple arcs running from a point in $L_i=\bdry F_i$ to a point in $L_j = \bdry F_j$.  These arcs are called \emph{clasps}.  See Figure~\ref{fig:clasps}. 
\item For any three distinct $i,j,k$, $F_i\cap F_j\cap F_k=\emptyset$.  
\end{enumerate}
\end{definition}

For this section we restrict to the case that the number of components is $n=2$.  A clasp between $F_1$ and $F_2$ has endpoints given by a point in $F_1\cap L_2$ and a point in $L_1\cap F_2$.  We call a clasp \textbf{positive} (or \textbf{negative}, respectively) if these points of intersection are positive (or negative, respectively).  See Figure~\ref{fig:clasps} for a local picture.   If $F_2$ is any Seifert surface for $L_2$, then the linking number $\lk(L_1,L_2)$ is given by counting with sign how many times $L_1$ passes through $F_2$.  See for example \cite[5D]{Rolfsen}.  If $F_1\cup F_2$ is a C-complex for $L_1\cup L_2$ then this is precisely the same as the signed count of the clasps shared by $F_1$ and $F_2$.  We are now ready to prove Theorem~\ref{thm:main 2-component}.


\begin{reptheorem}{thm:main 2-component}
Let $L=L_1\cup L_2$ be a 2-component link.  If $\lk(L_1,L_2)=0$ then $C(L)\in \{0,2\}$.  If $\lk(L_1,L_2)\neq 0$ then $C(L) = |\lk(L_1,L_2)|$
\end{reptheorem}

\begin{proof}[Proof of Theorem \ref{thm:main 2-component}]
Let $L=L_1\cup L_2$ be a 2-component link and $F=F_1\cup F_2$ be any C-complex bounded by $L$.  Let $c_+$ be the number of positive clasps in $F$ and $c_-$ be the number of negative clasps.  By the triangle inequality, 
$$
|\lk(L_1,L_2)| = |c_+-c_-|\le c_+ + c_-.
$$  So that $F$ has at least $|\lk(L_1,L_2)|$ many clasps.  As $F$ is an arbitrary C-complex bounded by $L$, $C(L)\ge |\lk(L_1,L_2)|$.  Thus, we need only to show that $C(L)\le |\lk(L_1,L_2)|$.  Since $C(L)$ is the minimum number of clasps amongst all C-complexes bounded by $L$, it suffices to exhibit a C-complex with precisely $|\lk(L_1,L_2)|$ clasps or 2 clasps in the case that $\lk(L_1,L_2)=0$.  Without loss of generality we shall assume that $\lk(L_1,L_2)\ge0$.

We begin by producing a pair of Seifert surfaces $F_1$ and $F_2$ for $L_1$ and $L_2$ which will have no negative clasps in their intersection but which may have some non-clasp intersections.  Let $F_1$ be any Seifert surface for $L_1$.  Suppose $F_1$ is transverse to $L_2$ and $F_1\cap L_2$ contains $n_+$ positive points of intersection and $n_-$ points of negative intersection.  If both of $n_+$ and $n_-$ are nonzero then as you follow $L_2$ you will at some point encounter a positive point of intersection with $F_1$ followed by a negative, as in Figure~\ref{fig: Tubing1}~(a).  By adding a tube to $F_1$ as in Figure~\ref{fig: Tubing1}~(b) we see a new Seifert surface bounded by $L_1$ which intersects $L_2$ in two fewer points.   Iterating, we see a Seifert surface for $L_1$, which we persist in calling $F_1$, bounded by $L_1$ which either intersects $L_2$ in only positive points or only negative points of intersection.  Thus, $n_+=0$ or $n_-=0$.  Since $n_+-n_-=\lk(L_1,L_2)\ge 0$ by assumption we see that $n_-=0$.  By the same process, we find a Seifert surface $F_2$ which intersects $L_1$ in only positive points of intersection.  

\begin{figure}[h]
\begin{picture}(200,100)
\put(0,10){\includegraphics[height=.2\textwidth]{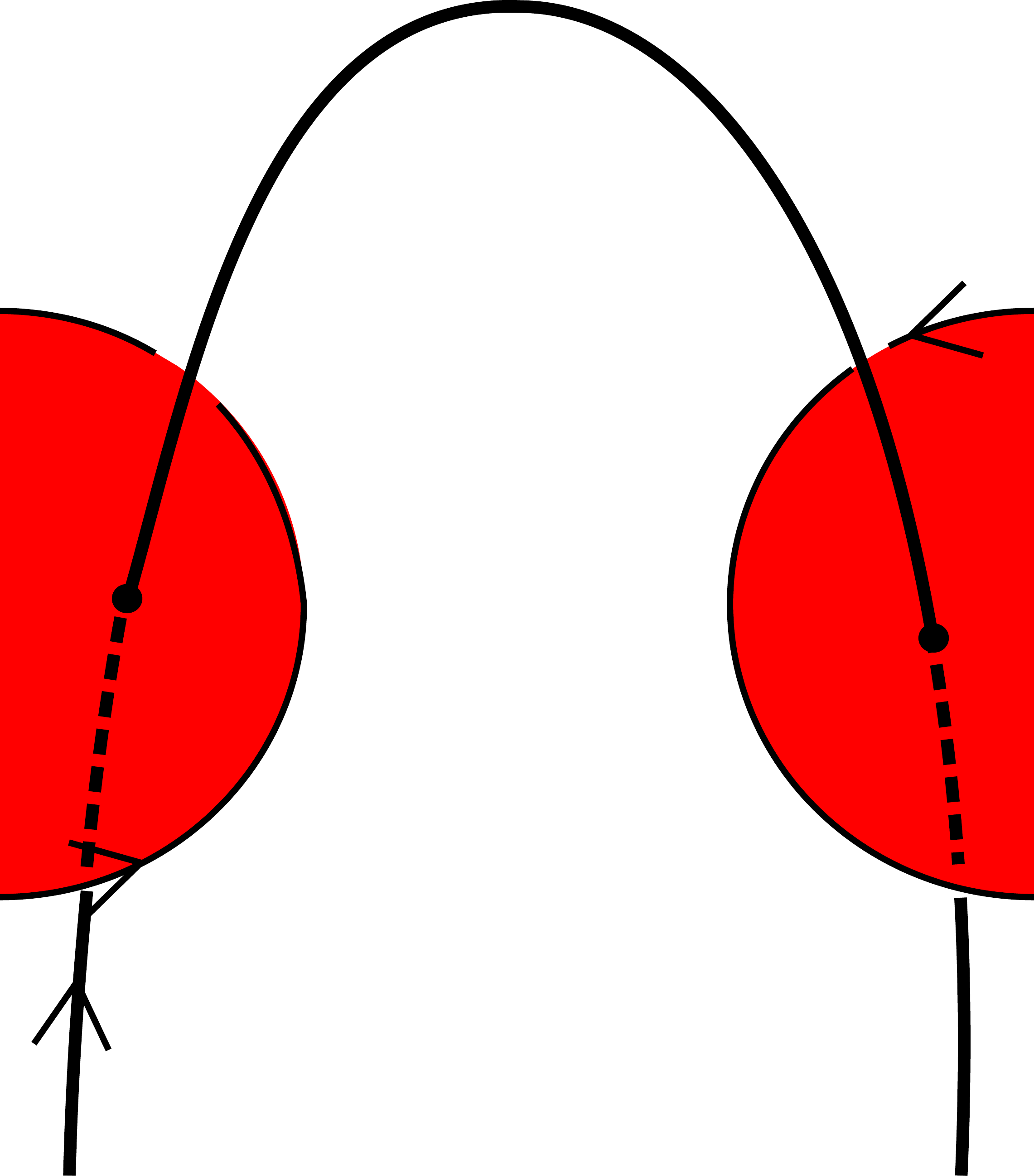}}
\put(30,0){(a)}
\put(120,10){\includegraphics[height=.2\textwidth]{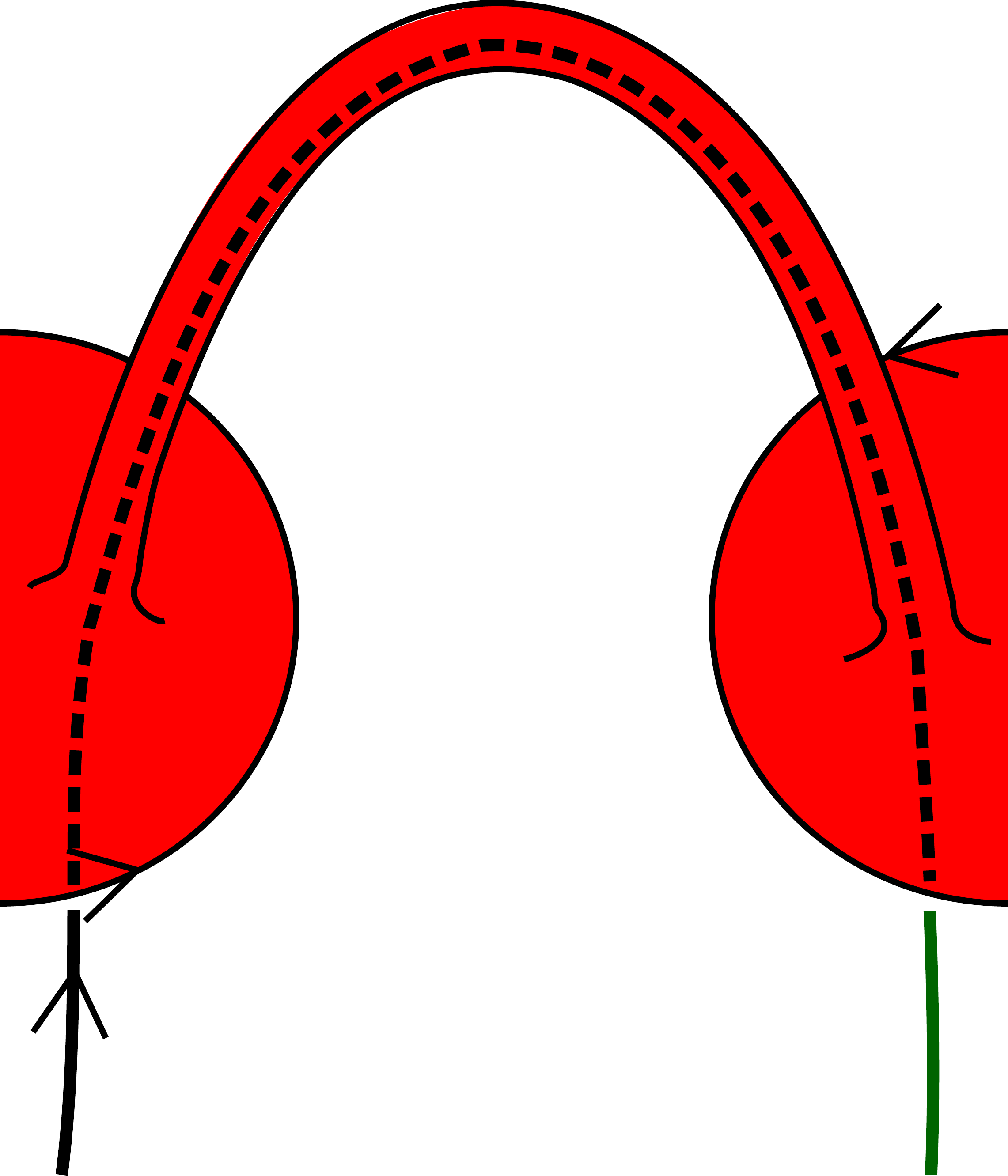}}
\put(150,0){(b)}
\end{picture}
\caption{(a) A knot $L_2$ intersecting an oriented surface $F_1$ in a positive point of intersection followed by a negative point of intersection.  (b) Adding a tube to $F_1$ removes both intersection points.}
\label{fig: Tubing1}
\end{figure}

There is no reason to expect that $F_1\cup F_2$ is a C-complex.  After a small isotopy of $F_1$ and $F_2$ we may assume that they intersect transversely.  Therefore $F_1\cap F_2$ consists of a collection of: 
\begin{itemize}
\item Arcs with one endpoint in $L_1 = \bdry F_1$ and the other in $L_2 = \bdry F_2$. (Clasps.)
\item Arcs with both endpoints in $L_1 = \bdry F_1$ or both endpoints in $L_2 = \bdry F_2$. (Ribbons.)
\item Simple closed curves interior to $F_1$ and interior to $F_2$. (Loops.)
\end{itemize}
See Figure~\ref{fig:ClaspRibbonLoop}.  Since $F_1$ has no negative points of intersection with $L_2$, there can be no negative clasps in $F_1\cap F_2$.  The endpoints of a ribbon intersection are intersection points between $F_1$ and $L_2$ (or $F_2$ and $L_1$) with opposite signs.  Since we have already arranged that there are no negative points of intersection, there can be no ribbon intersections in $F_1\cap F_2$.  Thus, $F_1\cap F_2$ consists only of loops and positive clasps.  It remains to further modify $F_1$ and $F_2$ to eliminate all loops.

\begin{figure}[h]
\begin{picture}(280,100)
\put(0,10){\includegraphics[height=.2\textwidth]{PosClasp.pdf}}
\put(32,0){(a)}
\put(120,10){\includegraphics[height=.2\textwidth]{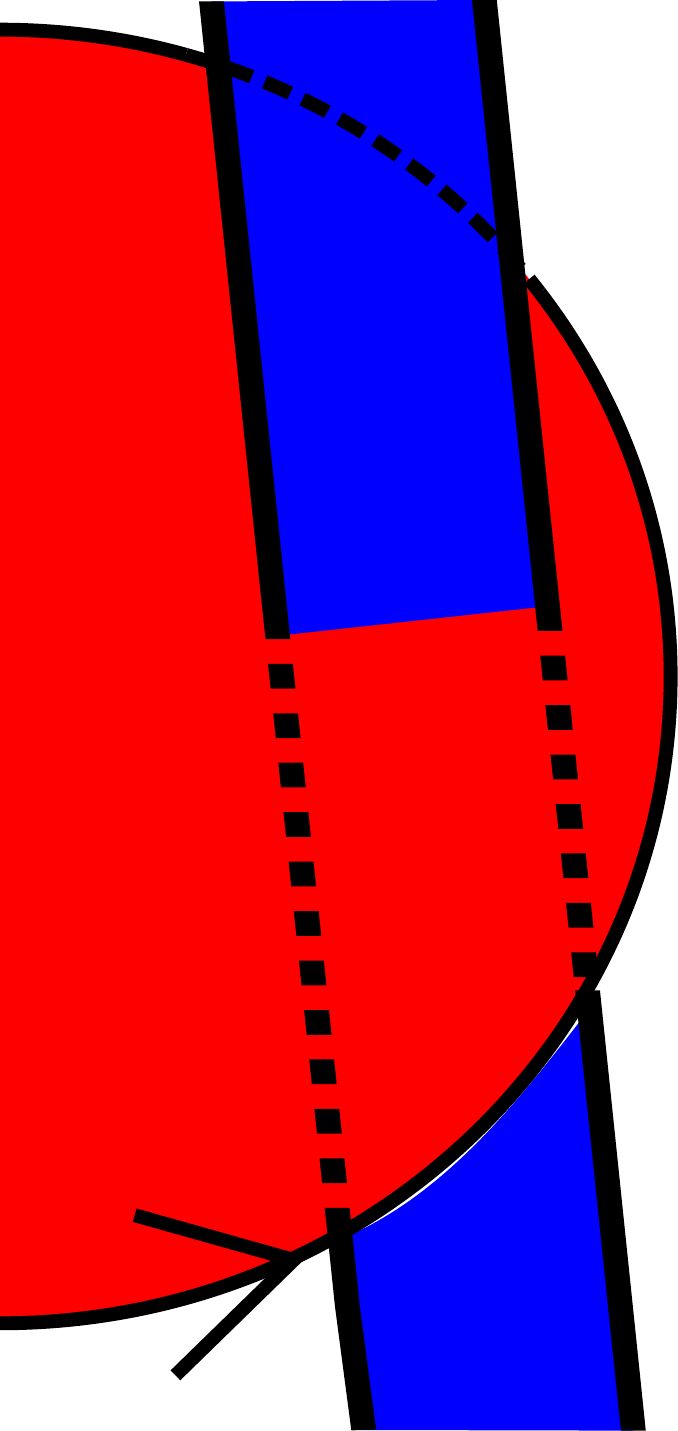}}
\put(135,0){(b)}
\put(200,10){\includegraphics[height=.2\textwidth]{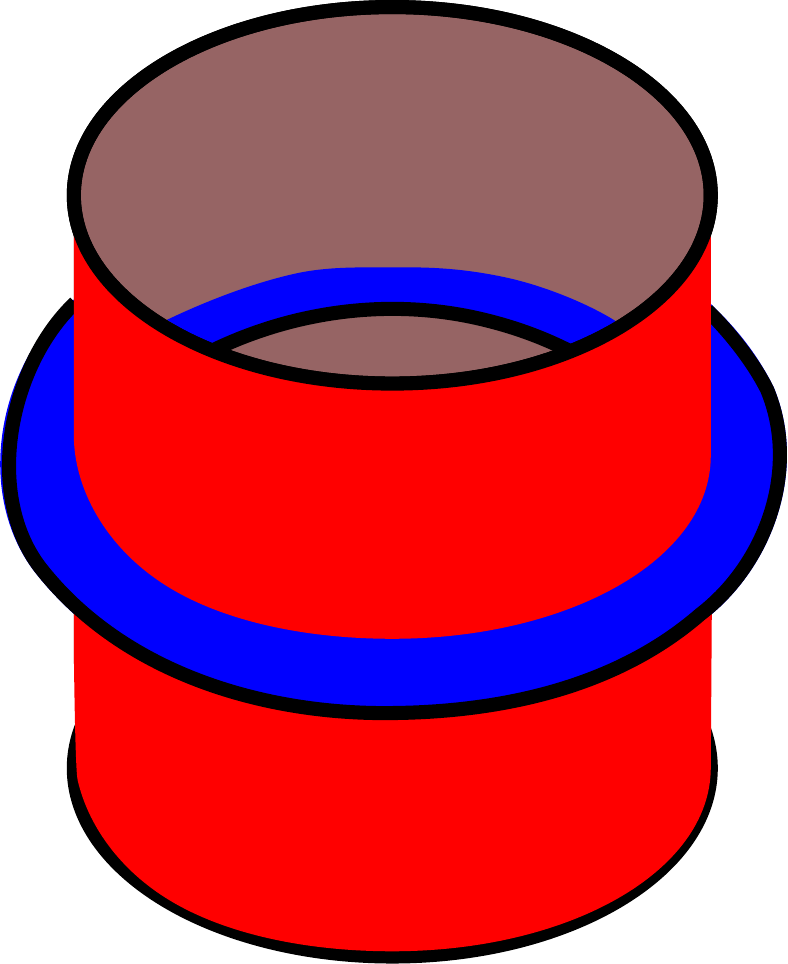}}
\put(233,0){(c)}
\end{picture}
\caption{(a) A positive clasp intersection.  (b) A ribbon intersection.  (c)  A loop intersection.}
\label{fig:ClaspRibbonLoop}
\end{figure}

Assume that $\lk(L_1,L_2)\neq0$ so that there there is at least one clasp in $F_1\cap F_2$.  Let $c$ be one such clasp.  Consider any loop intersection $\ell\subseteq F_1\cap F_2$.  Without loss of generality we may assume that there exists an arc $\alpha$ in $F_2$ running from a point in $c$ to a point in $\ell$. Moreover, we may assume that $\alpha$ connects two points pushed off from $F_1$ in the same normal direction.    Figure~\ref{fig:ClaspEatsLoop} reveals how one may add a tube to $F_1$ following $\alpha$ to combine $c$ and $\ell$ into a single simple arc.  The resulting arc has one endpoint in $L_1$ and the other in $L_2$.  In other words, it is a clasp.  Thus, we have reduced the number of loop intersections by 1 and preserved the number of clasp intersections.  Iterating, we eliminate all loop intersections and produce a C-complex for $L=L_1\cup L_2$ with number of clasps equal to $\lk(L_1,L_2)$, as claimed.

\begin{figure}
\begin{picture}(400,100)
\put(0,0){\includegraphics[height=.2\textwidth]{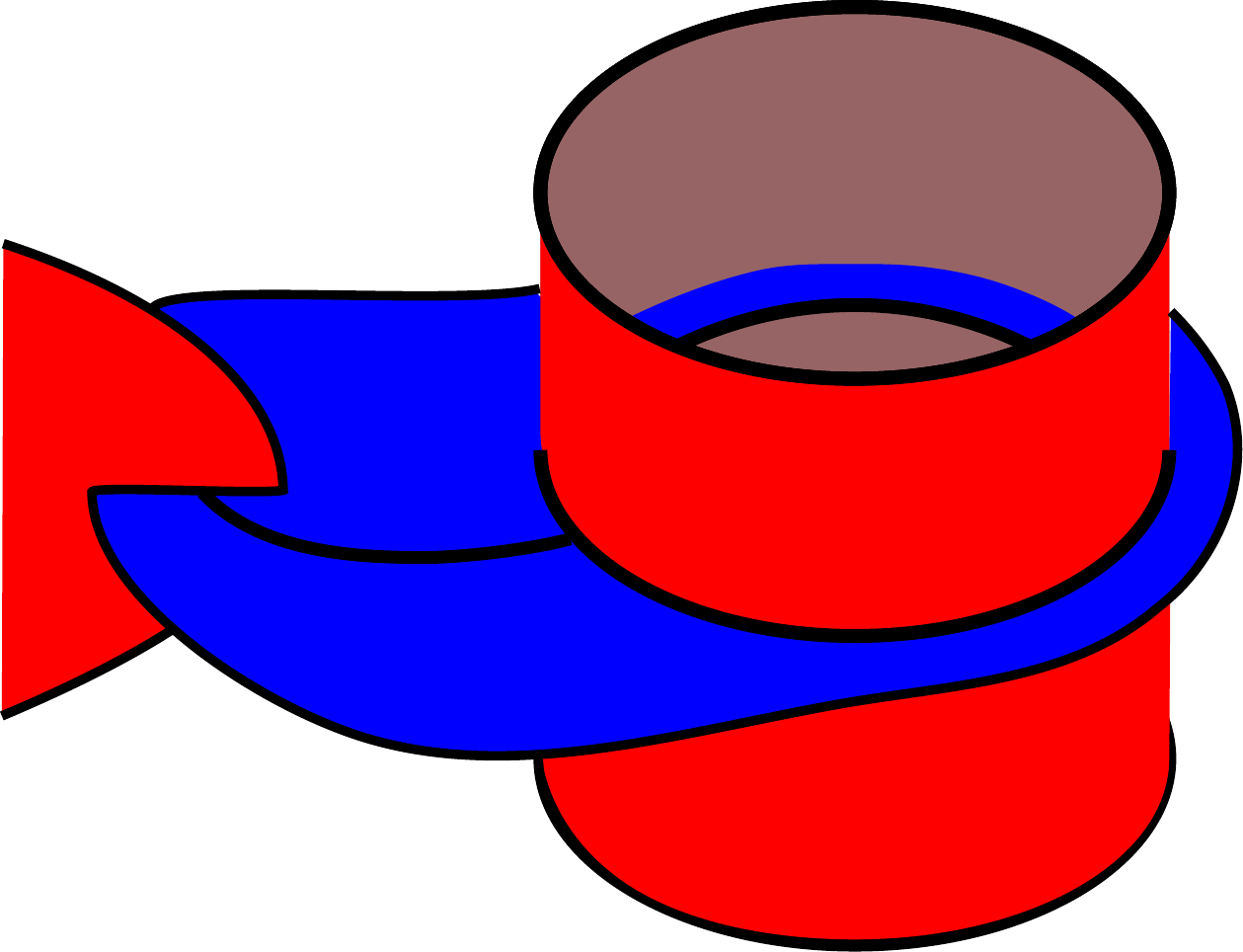}}
\put(140,0){\includegraphics[height=.2\textwidth]{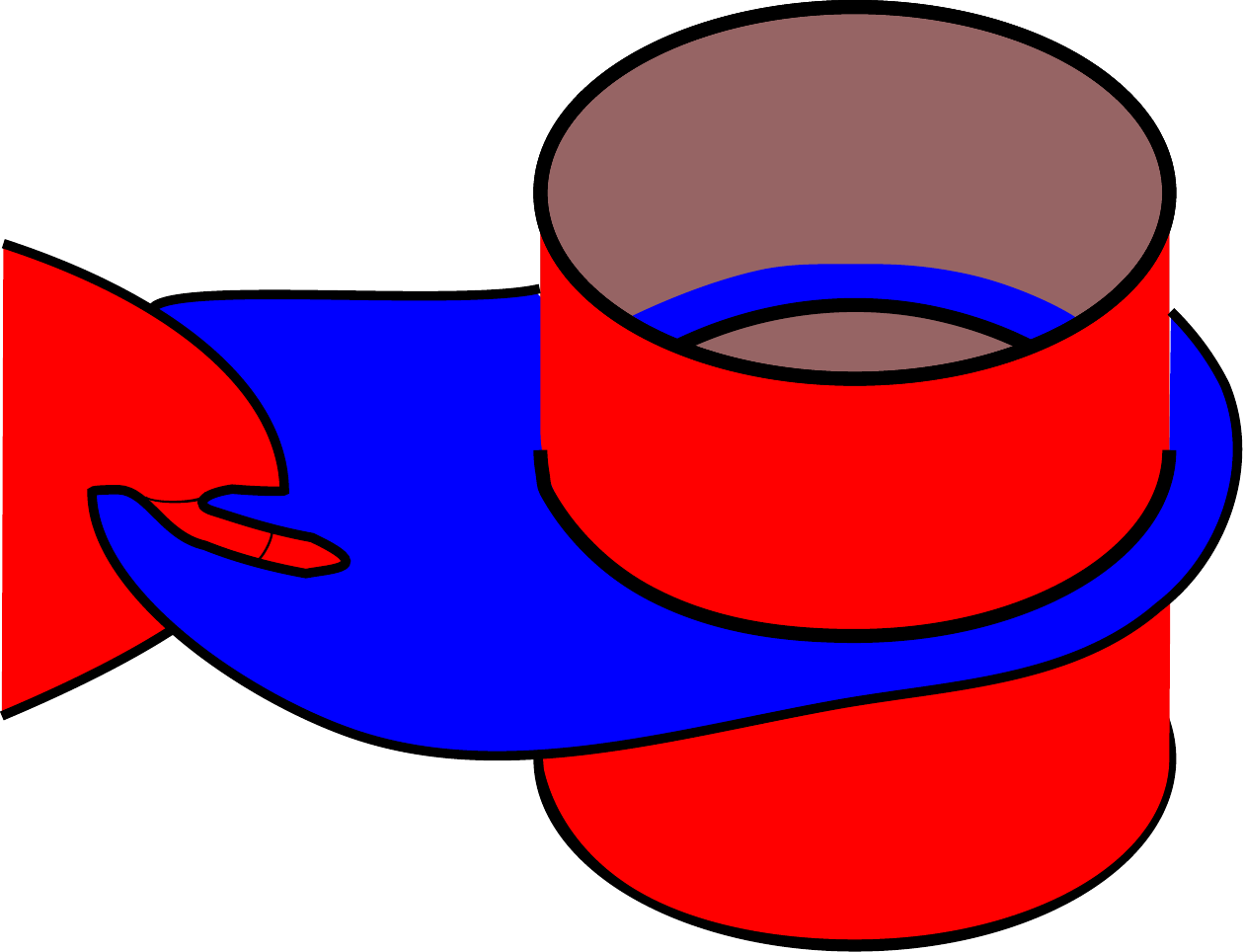}}
\put(280,0){\includegraphics[height=.2\textwidth]{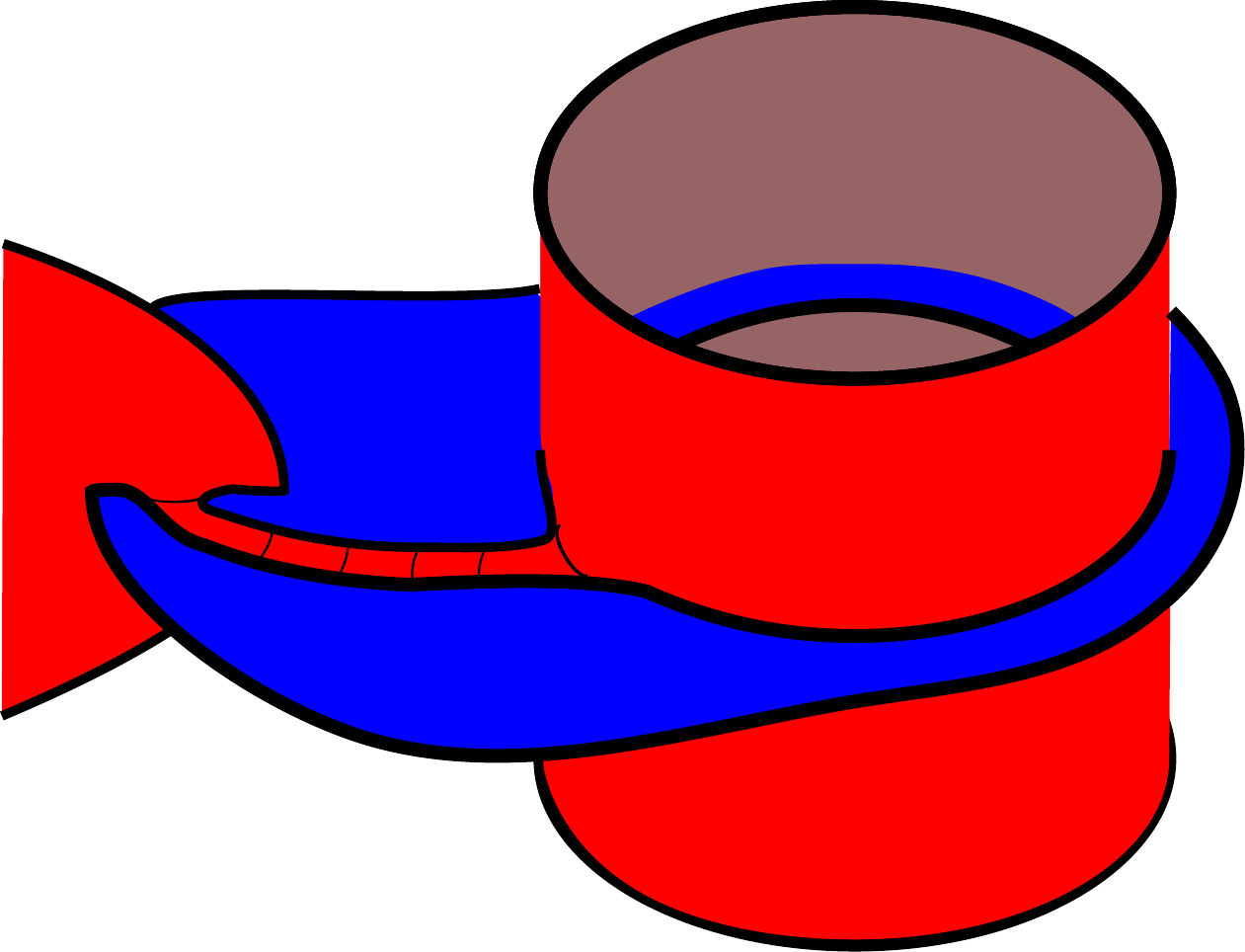}}
\end{picture}
\caption{Left: A pair of surfaces sharing a clasp and a loop intersection together with an arc running from the clasp to the loop.  Center: Perform a finger move to push the clasp intersection closer to the loop.  Right:  Tubing the clasp into the loop results in a single clasp intersection.}
\label{fig:ClaspEatsLoop}
\end{figure}

In the case that the linking number is zero, $F_1\cap F_2$ contains no clasps.  If $F_1\cap F_2$ also has no loops, then $F_1\cup F_2$ is a C-complex with no clasps and $C(L)=0$.  Otherwise, modify $F_1\cup F_2$ as in Figure~\ref{fig:CancellingClasps} to add a positive and a negative clasp.  Now we use the move of Figure~\ref{fig:ClaspEatsLoop} just as in the previous paragraph to remove all loop intersections and produce a C-complex with precisely 2 clasps, so $0\le C(L)\le 2$.  In order to see that $C(L)$ cannot be $1$, notice that since $c_+-c_-=\lk(L_1,L_2)=0$, it must be that $c_+=c_-$.  In particular, $F$ has an even number of clasps.  This completes the proof.   

\begin{figure}[h]
\begin{picture}(270,100)
\put(0,0){\includegraphics[height=.2\textwidth]{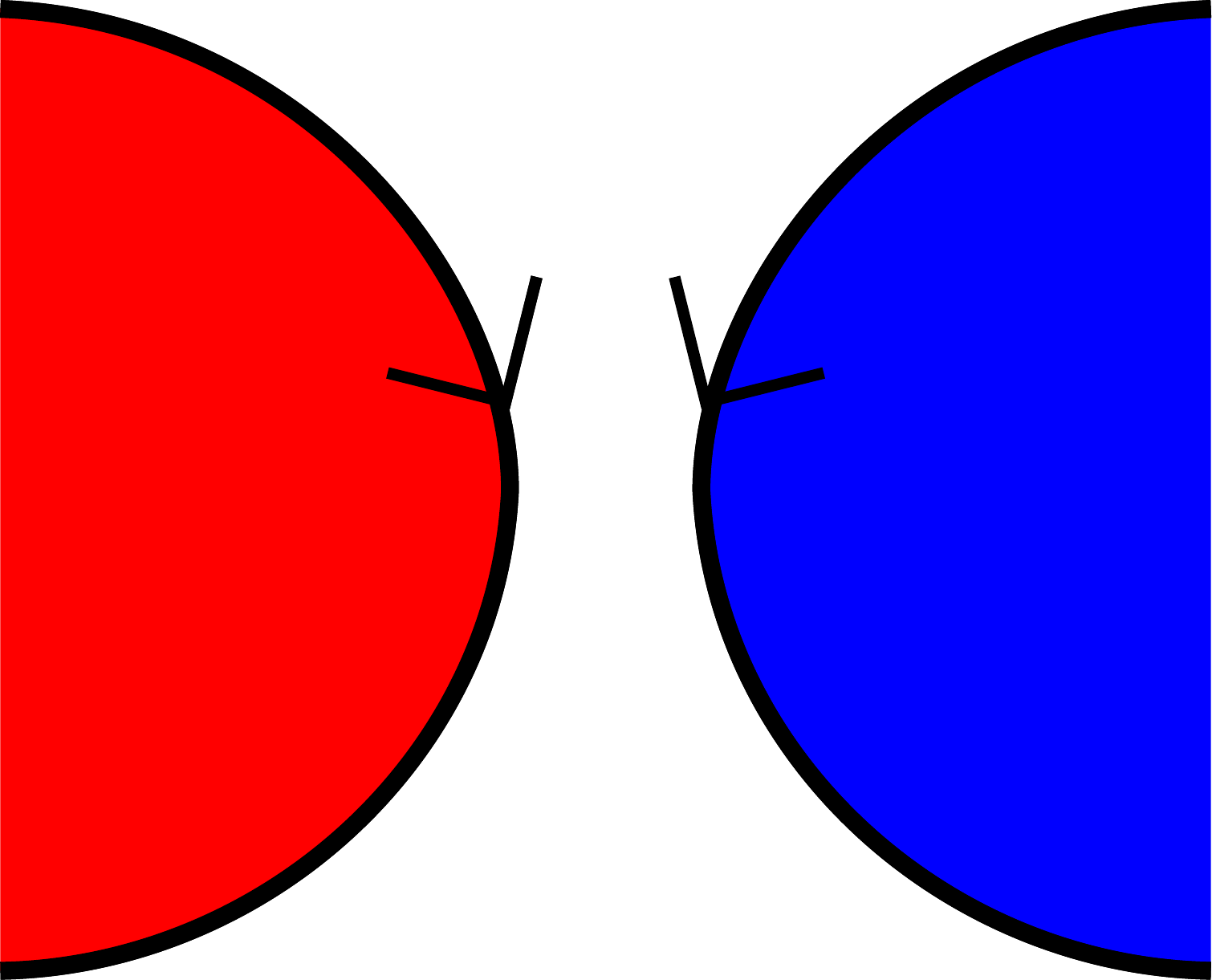}}
\put(150,0){\includegraphics[height=.2\textwidth]{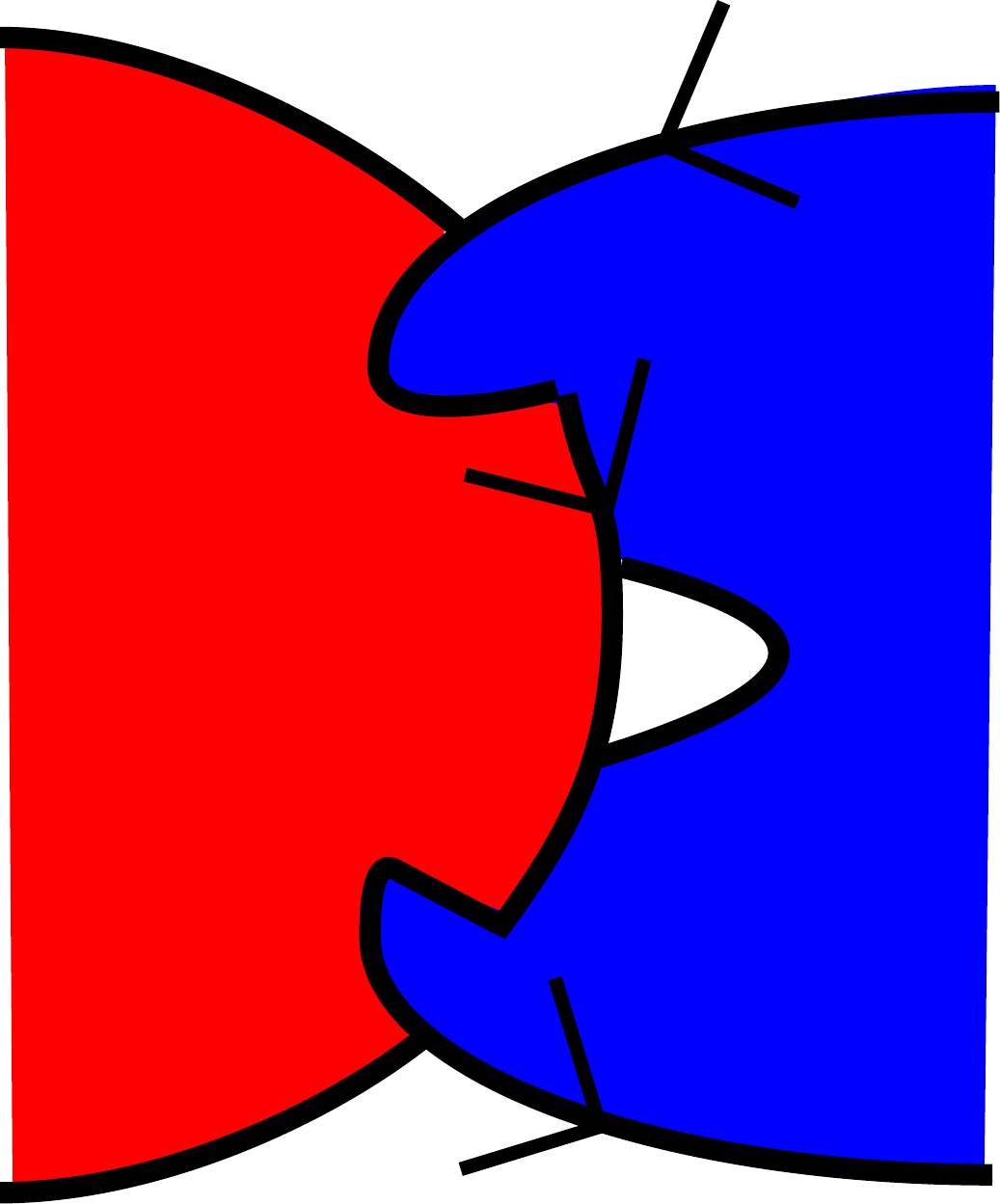}}
\end{picture}
\caption{Modifying a C-complex by inserting two cancelling clasps.}
\label{fig:CancellingClasps}
\end{figure}

\end{proof}

\section{Triple linking numbers via clasps and polyominos}\label{sect: triple linking}

In this section we recall an invariant of links called the triple linking number and provide a formula in terms of the area of a polyomino.  A \textbf{polyomino} is a region of $\R^2$ consisting of a union of closed unit squares with vertices at points in $\Z^2$.

In \cite{MellorMelvin2003} Mellor-Melvin produces a formula for the triple linking number for any union of Seifert surfaces for the components of $L$.  We shall recall it in the special case of a C-complex.  Let $L = L_1\cup\dots\cup L_n$ be an $n$-component link and $F=F_1\cup\dots
\cup F_n$ be a C-complex bounded by $L$.  We associate to each $k=1,\dots, n$ a word $w_k(F)$ called a \textbf{clasp word} as follows.  Pick a basepoint $p_k$ on $L_k$ and follow $L_k$ in the positive direction starting at $p_k$.  Record an $x_j$ whenever $L_k$ crosses through $F_j$ at a positive clasp and $x_j^{-1}$ when $L_k$ crosses $F_j$ at a negative clasp.  Let $e_{ij}(w_k(F))$ be given by counting with sign how often in $w_k(F)$ $x_i$ appears before $x_j$.  More formally, if $w_k(F)=\Prod_{v=1}^m x_{i_v}^{\epsilon_v}$ then 
\begin{equation}
\label{eqn:eij}e_{ij}(w_k(F)) = \Sum_{v=1}^m\Sum_{u=1}^v \delta(i_{u},i)\delta(i_{v},j)\epsilon_{u}\epsilon_{v}.
\end{equation}
Here $\delta(a,b)=\begin{cases} 1&\text{if }a=b\\0&\text{otherwise}\end{cases}$ indicates the Kronecker $\delta$.  We encourage the reader to take a moment and use this definition to compute $e_{12}(x_1x_2x_1^{-1}x_2^{-1})=1$.  The \textbf{triple linking number} is given by 
$$
\mu_{ijk}(L) = e_{ij}(w_k(F))+e_{jk}(w_i(F))+e_{ki}(w_j(F))
$$
When $L$ is a link with vanishing pairwise linking numbers, $\mu_{ijk}(L)$ is independent of the choice of $F$ and of the choice of basepoints.  

\begin{example}For the sake of clarity, we provide an example computing the triple linking number of the Borromean Rings $BR = BR_1\cup BR_2\cup BR_3$ using the C-complex $F$ of Figure~\ref{fig:examples}.


\begin{itemize}
\item Following $BR_1$ starting at the arrow we encounter in order a negative clasp with $F_3$, a positive clasp with $F_2$, a positive clasp with $F_3$ and a negative clasp with $F_2$.  Therefore, 
$$w(F_1) = x_3^{-1}x_2x_3x_2^{-1}.$$  
Similarly, 
$
w(F_2) = x_1^{-1}x_1
$
and 
$w(F_3) = x_1^{-1} x_1$.
\item Count with sign how many times you see $x_2$ before $x_3$ in $w(L_1)$ to get  $e_{23}(w_1(F)) = +1$.  Similarly, $e_{12}(w(F_3))=e_{31}(w(F_{2}))=0$.
\item The triple linking number is given by summing, 
$$\mu_{123}(BR) = e_{12}(w_3(F)) + e_{23}(w_1(F))+e_{31}(w_2(F)) = 1.$$
\end{itemize}
\end{example}

Our next goal is the statement and proof of Theorem~\ref{thm:eij in terms of area}, which computes $e_{ij}(w_k(F))$ in terms of some curve $\gamma_{ij}(w_k(F))$ in the plane.  We begin by explaining the construction of $\gamma_{ij}(w_k(F))$.  Let $w$ be any word in the letters $x_1^{\pm1}, \dots, x_n^{\pm1}$.  We give a procedure which associates to $w$ a curve in the plane.  Start at the point $(0,0)\in \R^2$.  Each time you encounter $x_i$ in $w$ travel right a length of $1$.  When $x_i^{-1}$ is encountered travel left.  When $x_j$ or $x_j^{-1}$ is encountered travel up or down respectively.  Call the resulting curve $\gamma_{ij}(w_k)$.   For instance, when $w = x_ix_jx_ix_jx_i^{-2}x_j^{-2}$, $\gamma_{ij}(w)$ appears in Figure~\ref{fig: polyExample}.  The assiduous reader will now compute $e_{ij}(w)=3$ using equation~\pref{eqn:eij} which suggestively agrees with the area of the region enclosed by $\gamma_{ij}(w)$.

\begin{figure}[h]
\begin{picture}(65,70)
\put(0,0){\includegraphics[height=.15\textwidth]{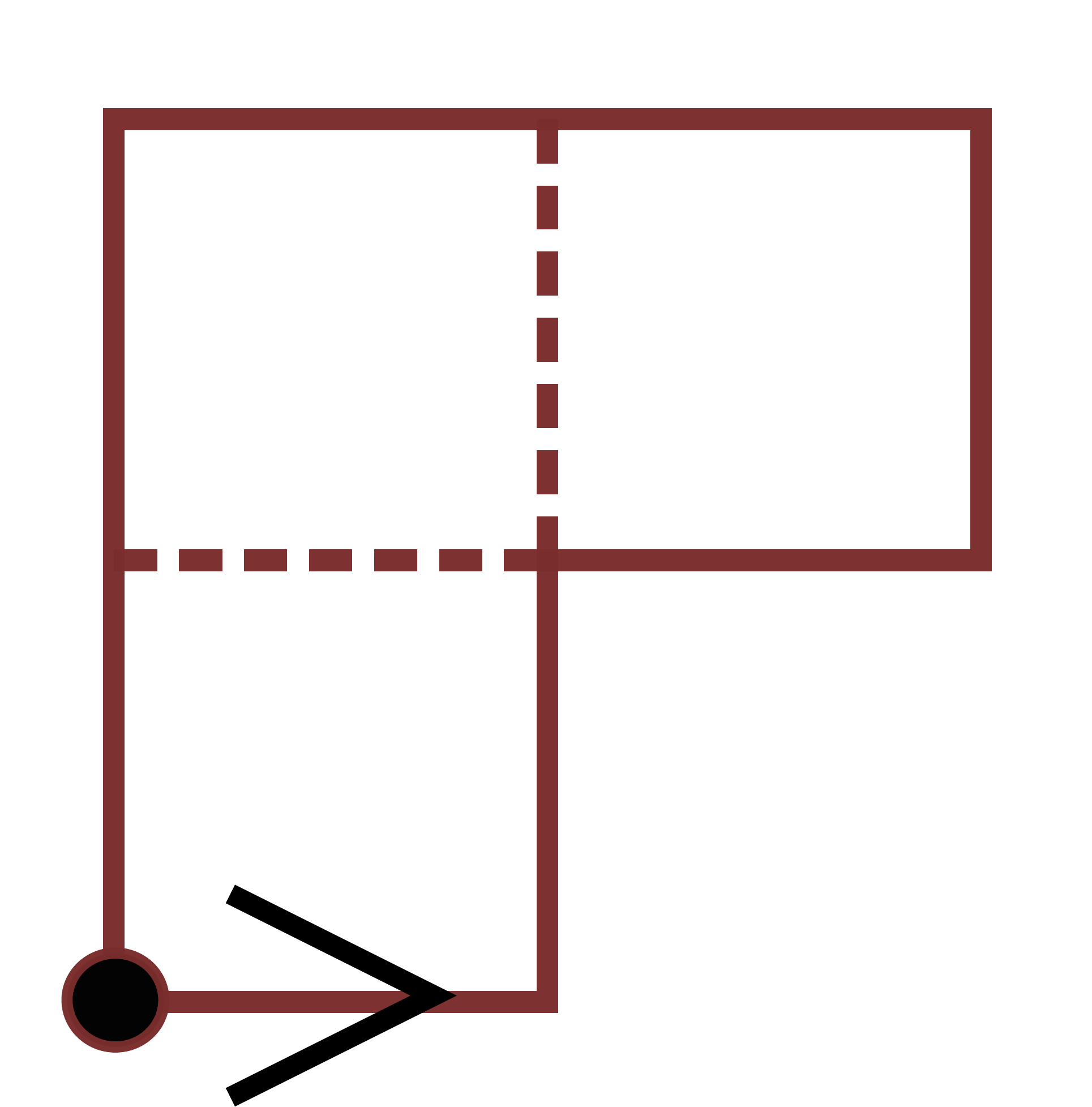}}
\put(15,-5){$x_i$}
\put(38,15){$x_j$}
\put(45,38){$x_i$}
\put(65,45){$x_j$}
\put(45,68){$x_i^{-1}$}
\put(15,68){$x_i^{-1}$}
\put(-13,45){$x_j^{-1}$}
\put(-13,15){$x_j^{-1}$}
\end{picture}
\caption{The curve $\gamma_{ij}(w)$ associated to the word $w = x_ix_jx_ix_jx_i^{-2}x_j^{-2}$ together with the region $\gamma_{ij}(w)$ encloses  encloses.  }
\label{fig: polyExample}
\end{figure}


We are now ready to prove Theorem~\ref{thm:eij in terms of area}.

\begin{reptheorem}{thm:eij in terms of area}
Let $w = \Prod_{v=1}^m x_{i_v}^{\epsilon_v}$ be a word in letters $x_1^{\pm1}, \dots, x_n^{\pm1}$.  For any $i\neq j\in \{1,\dots, n\}$, ${e_{ij}(w) = \Oint_{\gamma_{ij}(w)} x\,dy.}$
Additionally, if $\gamma_{ij}(w)$ is a simple closed curve with counterclockwise orientation, then $e_{ij}(w)$ is the area enclosed by $\gamma_{ij}(w)$.
\end{reptheorem}

\begin{proof}[Proof of Theorem~\ref{thm:eij in terms of area}]
Let $w = \Prod_{v=1}^m x_{i_v}^{\epsilon_v}$ be a word in the letters $x_1^\pm, \dots,  x_n^{\pm1}$.  Let $|w|=m$ be the length of $w$.  Then $\gamma_{ij}(w)$ consists of a concatenation of $|w|$ many curves, $\gamma_{ij}^1(w),\dots, \gamma_{ij}^m(w)$ where $\gamma_{ij}^v(w)$ is constant if $i_v\notin\{i,j\}$ and is a length 1 line segment traveling in a cardinal direction otherwise.  Therefore, the integral in question breaks up as 
$$
\Oint_{\gamma_{ij}(w)} x\,dy = \Sum_{v=1}^m\left( \Oint_{\gamma_{ij}^v (w)} x\,dy\right).
$$
If $i_v\neq j$ then $\gamma_{ij}^v (w)$ is either constant or parametrizes a horizontal line segment.  In either case $dy=0$ so that $\Oint_{\gamma_{ij}^v (w)} x\,dy=0$.  If $i_v=j$ then $\gamma_{ij}^v (w)$ is a vertical line segment parametrized by $\gamma_{ij}^v(t) = (x,t\cdot \epsilon_v+c)$ with $x$ and $c$ constants and $t$ running from $0$ to $1$.  In particular $dy=\epsilon_v\, dt$.  The fixed $x$-coordinate over which this vertical line sits is the signed count of $u<v$ with $i_u=i$:
$$
x=\Sum_{u=1}^v \delta(x_u,1)\epsilon_{u}.
$$
Thus, in the case that $i_v=j$, we have $\Oint_{\gamma_{ij}^v (w)} x\,dy =\Int_{0}^1 x\cdot \epsilon_v\, dt = x\cdot \epsilon_v = \Sum_{u=1}^v \delta(x_u,1)\epsilon_{u}\epsilon_v$.  Combining the cases $i_v = j$ and $i_v\neq j$, we see for for all $v$,  $$\Oint_{\gamma_{ij}^v (w)} x\,dy = \delta(i_v,j) \Sum_{u=1}^v \delta(x_u,1)\epsilon_{u}\epsilon_v.$$  Summing over all values of $v$, $\Oint_{\gamma_{ij}(w)} x\,dy = \Sum_{v=1}^n \delta(i_v,j) \Sum_{u=1}^v \delta(x_u,i)\epsilon_{u}\epsilon_v$.  An application of the distributive law reduces this to the definition of $e_{ij}(w)$ appearing in equation \pref{eqn:eij}.  This completes the proof of the first claim.

The second claim follows from a standard application of Green's theorem.  
\end{proof}

As an illustration of the efficacy of Theorem~\ref{thm:eij in terms of area} we use it to make some computations.

\begin{proposition}\label{prop:compute}
For any $n\in \N$, the generalized Boromean rings $BR^n$ of Figure~\ref{fig:examples} has triple linking number $n^2$.  
\end{proposition}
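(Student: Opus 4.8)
The plan is to compute the triple linking number $\mu_{123}(BR^n)$ by first reading off the three clasp words from the C-complex in Figure~\ref{fig:examples}~(b), then applying Theorem~\ref{thm:eij in terms of area} to evaluate each $e_{ij}$ as an enclosed area rather than grinding through the double sum in equation~\pref{eqn:eij}. Looking at the standard Borromean rings $BR = BR^1$, each component meets the other two surfaces in a short alternating pattern, and the generalized rings $BR^n$ are built by iterating this local clasping $n$ times in each of the pairwise interactions. So first I would carefully extract the combinatorial data: following each component $BR^n_k$ and recording an $x_j$ or $x_j^{-1}$ at each signed clasp, I expect to obtain clasp words that are ``$n$-fold'' analogues of the words $w(F_1) = x_3^{-1}x_2 x_3 x_2^{-1}$ and $w(F_2)=w(F_3)=x_1^{-1}x_1$ computed for $BR$ in the worked example.

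Once the words are in hand, the second step is to feed the relevant one into the geometric computation. By Theorem~\ref{thm:eij in terms of area}, $e_{ij}(w) = \Oint_{\gamma_{ij}(w)} x\,dy$, and when $\gamma_{ij}(w)$ is a simple closed curve traversed counterclockwise this is just the enclosed area. I anticipate that for the word $w_1(BR^n)$ the curve $\gamma_{23}(w_1)$ is a simple closed polyomino boundary---most likely an $n\times n$ square or a staircase region enclosing $n^2$ unit squares---so that $e_{23}(w_1(BR^n)) = n^2$. The remaining two contributions should be controlled by the same argument applied to $w_2$ and $w_3$: if those clasp words reduce (as in the $n=1$ case, where $w(F_2)=w(F_3)=x_1^{-1}x_1$ gave $e_{31}=e_{12}=0$) to words whose associated curves enclose no signed area, then $e_{12}(w_3) = e_{31}(w_2) = 0$ and we conclude
$$
\mu_{123}(BR^n) = e_{12}(w_3) + e_{23}(w_1) + e_{31}(w_2) = 0 + n^2 + 0 = n^2.
$$

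The main obstacle I foresee is not the area computation but the bookkeeping of the clasp words themselves: correctly reading the cyclic order and the signs of all $4n$ clasps off Figure~\ref{fig:examples}~(b), and verifying that the curve $\gamma_{23}(w_1(BR^n))$ really is a \emph{simple} closed curve with the correct (counterclockwise) orientation, since only then does Theorem~\ref{thm:eij in terms of area} identify $e_{23}$ with a genuine area rather than a signed integral with cancellation. To guard against sign errors I would sanity-check the $n=1$ specialization against the worked Borromean example (which must return $\mu_{123}=1$), and confirm that the pairwise linking numbers of $BR^n$ vanish, so that $\mu_{123}$ is well defined and independent of the chosen C-complex and basepoints. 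A secondary point worth stating explicitly is why the patterns for $w_2$ and $w_3$ contribute zero; if their associated curves are not literally trivial, I would argue that each encloses zero signed area (for instance because the curve retraces itself or bounds regions of cancelling orientation), so that the only surviving term is the $n^2$ coming from $e_{23}(w_1)$.
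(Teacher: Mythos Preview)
Your approach is correct and is essentially identical to the paper's: read off the clasp words from the C-complex in Figure~\ref{fig:examples}~(b), apply Theorem~\ref{thm:eij in terms of area}, and observe that $\gamma_{23}(w_1)$ bounds an $n\times n$ square while the other two curves contribute zero. For the record, the paper's clasp words are $w_1(F)=x_3^{-n}x_2^{\,n}x_3^{\,n}x_2^{-n}$, $w_2(F)=x_1^{\,n}x_1^{-n}$, and $w_3(F)=(x_1x_1^{-1})^n$, so your ``$n\times n$ square'' guess is exactly right, and the vanishing of $e_{31}(w_2)$ and $e_{12}(w_3)$ is immediate since $\gamma_{31}(w_2)$ and $\gamma_{12}(w_3)$ lie entirely on the $y$-axis and $x$-axis respectively.
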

\begin{proof}
Using the C-complex of Figure~\ref{fig:examples}~(b) we get clasp words$$
w_1(F) = x_3^{-n}x_2^nx_3^nx_2^{-n}, w_2(F) = x_1^nx_1^{-n}, w_2(F) = (x_1x_1^{-1})^n
$$
The curve $\gamma_{23}(w_1(F))$ traces a counterclockwise $n\times n$ square.  The curve, $\gamma_{31}(w_2(F))$ lies in the vertical line $x=0$ so that $e_{31}(w_2(F)) = 0$.  Finally, $\gamma_{23}(w_1(F))$ lies in the horizontal line $y=0$ so that $e_{12}(w_3(F)) = 0$.  Therefore, $\mu_{123}(BR^n) = n^2$. 
\end{proof}

\section{The proof of Theorem~\ref{thm:main 3-component}}

We now turn our attention to a lower bound on the number of clasps in a C-complex in terms of the triple linking number.  Notice that the curve $\gamma_{ij}(w(L_k))$ of Section~\ref{sect: triple linking} has length equal to the number of clasps in $F_k\cap F_i$ plus the number of clasps in $F_k\cap F_j$.  By Theorem~\ref{thm:eij in terms of area}, $\Oint_{\gamma_{ij}(w)} x\,dy = e_{ij}(w(L_k))$.  Thus, we will begin the proof of Theorem~\ref{thm:main 3-component} by studying how $\Oint_{\gamma} x\,dy.$ provides a lower bound on the length of $\gamma$.  

For the lemma below, a \textbf{polyomino curve} is a closed curve in $\R^2$ given by a concatenation of straight lines of length 1 between points in $\Z^2$.  The length of a curve, $\gamma$, is denoted by $||\gamma||$.

\begin{lemma}\label{lem:length compared to eij}
Let $\gamma$ be a polyomino curve in $\R^2$.  Let $A = \Oint_{\gamma} x\,dy$.  Then $||\gamma||\ge 2\ceil{2\sqrt{|A|}}$.  
\end{lemma}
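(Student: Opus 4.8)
The goal is to bound $\|\gamma\|$ from below in terms of $A = \oint_\gamma x\,dy$. The quantity $\oint_\gamma x\,dy$ is a signed area, so intuitively a curve enclosing signed area $A$ must be long enough to ``wrap around'' that much area. The plan is to reduce to an isoperimetric-type inequality for polyomino curves: among closed lattice curves enclosing a fixed signed area, the length is minimized (roughly) by a square. First I would reduce to the case $A > 0$ by noting that reversing orientation negates $A$ but preserves $\|\gamma\|$, and if $A = 0$ the inequality is trivial since the right side is $0$. Then I would relate the signed area integral to the geometry of the curve: writing $\gamma$ as a concatenation of unit horizontal and vertical steps, the length $\|\gamma\|$ equals the total number of steps, while $A$ is controlled by how far the curve ranges in $x$ and $y$.

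The key estimate I would aim for is a bound of the form $|A| \le (\text{something quadratic in the step counts})$, which can then be inverted. Specifically, suppose the curve uses $h$ horizontal steps and $v$ vertical steps, so $\|\gamma\| = h + v$. The signed area $\oint x\,dy$ accumulates $x$-coordinate times signed vertical displacement; since each vertical step contributes $\pm x$ where $|x|$ is at most the horizontal extent of the curve, and the horizontal extent is at most $h/2$ (the curve must return to its start, so rightward and leftward horizontal steps are equal in number, bounding the maximal $x$-excursion by $h/2$), while the number of vertical steps bounds the total vertical ``sweep.'' I would make this precise to get $|A| \le \frac{1}{2}\lfloor h/2\rfloor \cdot \lfloor v/2 \rfloor$ or a comparable product bound, essentially saying the curve fits in a box of dimensions at most $(h/2) \times (v/2)$ and the enclosed signed area cannot exceed the box area. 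Optimizing $h + v$ subject to a product constraint $\lfloor h/2\rfloor\lfloor v/2\rfloor \ge |A|$ by AM--GM forces $h \approx v \approx 2\sqrt{|A|}$, yielding $\|\gamma\| \ge 4\sqrt{|A|}$, and the integrality/evenness of the step counts promotes this to the ceiling expression $2\ceil{2\sqrt{|A|}}$.

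The main obstacle will be making the area-versus-extent bound both correct and tight enough to produce the exact constant and the exact ceiling. A crude bound of the form $|A| \le (h/2)(v/2)$ gives $\|\gamma\| \ge 4\sqrt{|A|}$ but not the ceiling refinement; recovering $2\ceil{2\sqrt{|A|}}$ requires using that $h$ and $v$ are each even (every closed lattice curve has an even number of horizontal and of vertical steps, since net displacement in each direction is zero) so that $\lfloor h/2\rfloor = h/2$ and $\lfloor v/2\rfloor = v/2$, and then carefully tracking the integer optimization $h + v \ge 2\ceil{2\sqrt{|A|}}$ subject to $(h/2)(v/2) \ge |A|$ with $h, v$ even positive integers. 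I expect the cleanest route is to set $a = h/2$, $b = v/2$, reduce to: minimize $2(a+b)$ over positive integers $a, b$ with $ab \ge |A|$, and verify via AM--GM together with a short integer argument that the minimum equals $2\ceil{2\sqrt{|A|}}$.

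\textbf{Alternative for the area bound.} If the direct combinatorial bound $|A| \le ab$ is awkward to establish rigorously, I would instead fall back on the continuous isoperimetric inequality: $\gamma$ bounds a region (or regions) whose algebraic area is $A$, and the classical inequality $4\pi\,(\text{area}) \le (\text{perimeter})^2$ gives $\|\gamma\| \ge \sqrt{4\pi|A|} = 2\sqrt{\pi}\sqrt{|A|}$. Since $2\sqrt{\pi} > 2\cdot 2/\!\sqrt{\pi}$ is in the wrong direction for the sharp constant $4$, this continuous bound is actually weaker than what is needed; hence the lattice-specific argument via even step counts is essential rather than optional, and I would commit to the combinatorial box-area bound as the genuine heart of the proof.
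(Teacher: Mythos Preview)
Your approach is essentially the paper's own argument, carried out a bit more directly. The paper likewise writes $\gamma$ as a concatenation of unit steps, notes that rightward and leftward (resp.\ upward and downward) steps are equal in number, translates so that $0\le x(t)\le h$ (their $h$ is your $h/2$), and bounds each vertical segment's contribution to $\oint x\,dy$ by $h$ to obtain $|A|\le h\cdot v$ (in your notation, $|A|\le (h/2)(v/2)=ab$). The paper then finishes by comparing to an $h\times v$ rectangle and invoking the Harary--Harborth result that the minimum perimeter of a polyomino of area $n$ is $2\lceil 2\sqrt{n}\rceil$; you instead propose to finish by the elementary observation that $a+b\ge 2\sqrt{ab}\ge 2\sqrt{|A|}$ together with integrality of $a+b$, which gives $a+b\ge\lceil 2\sqrt{|A|}\rceil$ directly. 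Your route avoids the external citation and also makes the paper's separate treatment of the simple-curve case unnecessary, since the box bound already covers it.

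One small caution: the tentative bound you wrote with an extra factor of $\tfrac{1}{2}$, namely $|A|\le \tfrac{1}{2}\lfloor h/2\rfloor\lfloor v/2\rfloor$, is too strong and not what actually holds; the correct and sufficient bound is exactly $|A|\le (h/2)(v/2)$, obtained as in the paper by translating so the minimum $x$-coordinate is $0$ and noting that each of the $v/2$ upward segments contributes at most $h/2$ while each downward segment contributes at least $-h/2$. With that constant in place your AM--GM\,+\,integrality step goes through cleanly.
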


\begin{proof}
Let $\gamma$ be a polyomino curve in $\R^2$ and let $A = \Oint_{\gamma} x\,dy$.  If $\gamma$ is a simple closed curve then a standard application of Green's theorem shows that $|A| = \underset{A}{\displaystyle\iint} 1 \, dxdy$ is the area of the region $R$ enclosed by $\gamma$.  In \cite{HH76}, Harary-Harborth shows that the minimum perimeter amongst all polyominos with a fixed area $|A|$ is given by $\left(2\ceil{2\sqrt{|A|}}\right)$.  Thus, $||\gamma||$, which is the perimeter of $A$, is at least $2\ceil{2\sqrt{|A|}}$, as the lemma claims.

It remains to deal with the case that $\gamma$ is not simple.  Recall that by assumption, $\gamma$ consists of a concatenation of vertical and horizontal line segments of length 1.  Denote the rightward pointing horizontal line segments as $\gamma^r_1(t),\dots, \gamma^r_h(t)$, the leftward pointing as $\gamma^\ell_1(t),\dots, \gamma^\ell_h(t)$, the upward as $\gamma^u_1(t),\dots, \gamma^u_v(t)$ and the downward as $\gamma^d_1(t),\dots, \gamma^d_v(t)$.  As $\gamma$ is a closed curve, the number of rightward and leftward pointing segments must be equal to each other as must the number of upward and downward pointing segments. 

Up to a translation and a reparametrization preserving $||\gamma||$ and $\Oint_{\gamma} x\,dy$, we may assume that $\gamma$ is parametrized by some $(x(t),y(t))$ such that the minimum value of $x(t)$ is $x(0)=0$. It follows for all $t$ that $0\le x(t)\le h$, where $h$ is the number of rightward pointing length 1 line segments in $\gamma$.   Breaking the integral up as a sum,
\begin{equation}\label{eqn:ControlA}A = \Oint_{\gamma} x\,dy = \Sum_{i=1}^v\Oint_{\gamma^u_i}x\,dy+\Sum_{i=1}^v \Oint_{\gamma^d_i}x\,dy+\Sum_{i=1}^h \Oint_{\gamma^r_i}x\,dy+\Sum_{i=1}^h \Oint_{\gamma^\ell_i}x\,dy \end{equation}
Since $\gamma^\ell_i$ and $\gamma^r_i$ are horizontal line segments, they each have $dy=0$ so that $\Oint_{\gamma^r_i}x\,dy=\Oint_{\gamma^\ell_i}x\,dy=0.$  
Since $\gamma^u_i$ is an upward pointing length 1 line segment, we may parametrize $\gamma^u_i$ as $(x, t+c)$ where $x$ and $c$ are constant and $t$ runs from $0$ to $1$.  Therefore, $dy = dt$ and $0\le x\le h$. Thus, $\Oint_{\gamma_i^u}x\,dy = \Int_{0}^1x dt = x$ and in particular
$0\le \Oint_{\gamma_i^u}x\,dy \le h.$
Similarly, 
$-h\le \Oint_{\gamma_i^d}x\,dy\le 0$.
Therefore, $0\le \Sum_{i=1}^v \Oint_{\gamma_i^u}y\,dx \le h\cdot v$ and  $-h\cdot v \le \Sum_{i=1}^v \Oint_{\gamma_i^d}y\,dx \le 0$.  Applying these bounds  to the rightmost expression in \pref{eqn:ControlA}  we see that $-h\cdot v\le A\le h\cdot v$, so that $|A|\le h \cdot v$.  

Let $R$ be an $h\times v$ rectangle and let $r$ be the curve traversing its boundary counterclockwise.  As $r$ is made up of the same number of length 1 line segments as $\gamma$, $||\gamma||=||r||$.  Since $R$ is a polyomino of area $h\cdot v$, \cite{HH76} applies and  $||r||\ge 2\ceil{2\sqrt{h\cdot v}}$.  
 Summarizing,
$$
||\gamma|| = ||r||\ge 2\ceil{2\sqrt{h\cdot v}}\ge  2\ceil{2\sqrt{|A|}}.
$$ This completes the proof.

\end{proof}

If $w = \Prod_{v=1}^m x_{i_v}^{\epsilon_v}$ is a word in $x_1^{\pm1}, \dots, x_n^{\pm1}$ for which the signed count of $x_i$'s and $x_j$'s are both zero then $||\gamma_{ij}(w)||$ is the same as the length of the word $w$ after deleting all letters other than $x_i^{\pm1}$ and $x_j^{\pm1}$, while $e_{ij}(w) = \Oint_{\gamma_{ij}(w)} y\,dx$ by Theorem~\ref{thm:eij in terms of area}.  Thus, Lemma~\ref{lem:length compared to eij} has the following corollary.  

\begin{corollary}\label{cor:length compared to eij}
Let $w = \Prod_{n=1}^m x_{i_n}^{\epsilon_n}$ be a word in $x_1^{\pm1},\dots x_n^{\pm1}$.  Fix some $i\neq j\in \{1,\dots, n\}$ and assume the signed count of $x_i$'s and $x_j$'s are both zero.  If $e_{ij}(w)=A$ then $|w|\ge 2\ceil{2\sqrt{|A|}}$
\end{corollary}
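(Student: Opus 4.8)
The plan is to reduce the statement directly to Lemma~\ref{lem:length compared to eij} by recognizing $\gamma_{ij}(w)$ as a closed polyomino curve whose length is controlled by $|w|$. First I would pass to the subword $w'$ obtained from $w$ by deleting every letter other than $x_i^{\pm1}$ and $x_j^{\pm1}$. Deleting letters can only shorten the word, so $|w'|\le |w|$; moreover these deletions change neither $e_{ij}$ nor the geometry of the associated curve. Indeed, in the formula \pref{eqn:eij} every term involving a deleted letter carries a factor $\delta(i_u,i)\delta(i_v,j)=0$, so $e_{ij}(w')=e_{ij}(w)=A$; and in the construction of $\gamma_{ij}$ a letter $x_k^{\pm1}$ with $k\notin\{i,j\}$ contributes only a constant (length $0$) portion of the curve, so $\gamma_{ij}(w')$ and $\gamma_{ij}(w)$ trace the same polyomino curve and have the same value of $\oint x\,dy$.

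Next I would verify the two hypotheses needed to invoke the lemma. Because the signed counts of $x_i$ and of $x_j$ in $w$, hence in $w'$, both vanish, the net horizontal displacement and the net vertical displacement of $\gamma_{ij}(w')$ are both zero; therefore the curve returns to its starting point $(0,0)$ and is closed. Since $w'$ consists solely of the letters $x_i^{\pm1}, x_j^{\pm1}$, each of which contributes exactly one length-$1$ segment between lattice points in $\Z^2$, the curve $\gamma_{ij}(w')$ is a closed polyomino curve with $||\gamma_{ij}(w')|| = |w'|$.

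Finally I would apply Theorem~\ref{thm:eij in terms of area} to identify $\Oint_{\gamma_{ij}(w')} x\,dy = e_{ij}(w')=A$, and then feed the closed polyomino curve $\gamma_{ij}(w')$ into Lemma~\ref{lem:length compared to eij} to obtain $||\gamma_{ij}(w')||\ge 2\ceil{2\sqrt{|A|}}$. Combining with $|w|\ge |w'| = ||\gamma_{ij}(w')||$ then yields the desired inequality $|w|\ge 2\ceil{2\sqrt{|A|}}$. Since every substantive estimate is already packaged in Lemma~\ref{lem:length compared to eij} (through the Harary--Harborth isoperimetric bound), the corollary is essentially bookkeeping; the one point genuinely worth checking is that the vanishing signed-count hypothesis is exactly what forces $\gamma_{ij}(w')$ to be closed, so that the lemma is applicable at all.
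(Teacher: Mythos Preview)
Your argument is correct and mirrors the paper's own reasoning: pass to the subword in $x_i^{\pm1},x_j^{\pm1}$, observe that the vanishing signed counts make $\gamma_{ij}$ a closed polyomino curve of length $|w'|$, identify $\oint x\,dy$ with $e_{ij}(w)$ via Theorem~\ref{thm:eij in terms of area}, and then invoke Lemma~\ref{lem:length compared to eij}. There is nothing to add.
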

%
%
%

We are now ready to prove Theorem~\ref{thm:main 3-component} giving a lower bound on $C(L)$ in terms of $\mu_{ijk}(L)$.

\begin{reptheorem}{thm:main 3-component}
Let $L=L_1\cup L_2\cup L_3$ be a 3-component link with vanishing pairwise linking numbers.  Then $C(L)\ge 2\ceil{2\sqrt{|\mu_{123}(L)|/3}}$.  
\end{reptheorem}
\begin{proof}

Let $L$ be a 3-component link with vanishing pairwise linking numbers and $F$ be a C-complex bounded by $L$.  Let $C(F)$ be the number of clasps between the components of $F$.    Let $w_1 = w_1(F)$, $w_2 = w_2(F)$ and $w_3=w_3(F)$ be the resulting clasp words.  Each clasps corresponds to a letter in two of these words, and so 
$$2C(F) = |w_1|+|w_2|+|w_3|.$$

Let $e_1=e_{23}(w_1)$, $e_2=e_{31}(w_2)$, and $e_3=e_{12}(w_3)$.  Then $\mu_{123}(L) = e_1+e_2+e_3$.  Assume without loss of generality that $|e_1|\le |e_2|\le |e_3|$.  Then it must be that $|e_3|\ge \frac{|\mu_{123}(L)|}{3}$.  Corollary \ref{cor:length compared to eij} concludes that $|w_3|\ge 2\ceil{2\sqrt{|e_3|}}\ge 2\ceil{2\sqrt{|\mu_{123}(L)|/3}}$.

Now, each letter of $w_3$ corresponds to either a clasp in $F_3\cap F_1$ or a clasp in $F_3\cap F_2$.  Each of these clasps produces a letter in $w_1$ or in $w_2$.  As a consequence $|w_3|\le |w_1|+|w_2|$.  Putting this together,
$$2C(F) = |w_1|+|w_2|+|w_3| \ge 2|w_3| \ge 4\ceil{2\sqrt{|\mu_{123}(L)|/3}}
$$
division by 2 completes the proof.  

\end{proof}

\bibliographystyle{plain}
\bibliography{biblio}  

\begin{thebibliography}{10}

\bibitem{Cimasoni2004}
David Cimasoni.
\newblock A geometric construction of the {C}onway potential function.
\newblock {\em Comment. Math. Helv.}, 79(1):124--146, 2004.

\bibitem{CimFlo2008}
David Cimasoni and Vincent Florens.
\newblock Generalized {S}eifert surfaces and signatures of colored links.
\newblock {\em Trans. Amer. Math. Soc.}, 360(3):1223--1264 (electronic), 2008.

\bibitem{Conway2018}
Anthony Conway.
\newblock An explicit computation of the {B}lanchfield pairing for arbitrary
  links.
\newblock {\em Canad. J. Math.}, 70(5):983--1007, 2018.

\bibitem{CFT2018}
Anthony Conway, Stefan Friedl, and Enrico Toffoli.
\newblock The {B}lanchfield pairing of colored links.
\newblock {\em Indiana Univ. Math. J.}, 67(6):2151--2180, 2018.

\bibitem{Cooper82}
D.~Cooper.
\newblock The universal abelian cover of a link.
\newblock In {\em Low-dimensional topology ({B}angor, 1979)}, volume~48 of {\em
  London Math. Soc. Lecture Note Ser.}, pages 51--66. Cambridge Univ. Press,
  Cambridge-New York, 1982.

\bibitem{CooperThesis}
Daryl Cooper.
\newblock Signatures of surfaces with applications to knot and link cobordism.
\newblock 1982.
\newblock Warwick Thesis.

\bibitem{DNOP}
Christopher~William {Davis}, Matthias {Nagel}, Patrick {Orson}, and Mark
  {Powell}.
\newblock {Triple linking numbers and surface systems}.
\newblock {\em arXiv e-prints}, page arXiv:1709.08478, Sep 2017.

\bibitem{HH76}
Frank Harary and Heiko Harborth.
\newblock Extremal animals.
\newblock {\em J. Combinatorics Information Syst. Sci.}, 1(1):1--8, 1976.

\bibitem{Kearton75}
C.~Kearton.
\newblock Blanchfield duality and simple knots.
\newblock {\em Trans. Amer. Math. Soc.}, 202:141--160, 1975.

\bibitem{MellorMelvin2003}
Blake Mellor and Paul Melvin.
\newblock A geometric interpretation of {M}ilnor's triple linking numbers.
\newblock {\em Algebr. Geom. Topol.}, 3:557--568 (electronic), 2003.

\bibitem{Milnor1950}
John Milnor.
\newblock Isotopy of links. {A}lgebraic geometry and topology.
\newblock In {\em A symposium in honor of S. Lefschetz}, pages 280--306.
  Princeton University Press, Princeton, N. J., 1957.

\bibitem{Rolfsen}
Dale Rolfsen.
\newblock {\em Knots and links}, volume~7 of {\em Mathematics Lecture Series}.
\newblock Publish or Perish Inc., Houston, TX, 1990.
\newblock Corrected reprint of the 1976 original.

\bibitem{Shibuya1974}
Tetsuo Shibuya.
\newblock Some relations among various numerical invariants for links.
\newblock {\em Osaka Math. J.}, 11:313--322, 1974.

\end{thebibliography}

\end{document}